\numberwithin{equation}{section}
\newtheorem{thrm}{Theorem}[section]
\newtheorem{lemma}[thrm]{Lemma}
\newtheorem{prop}[thrm]{Proposition}
\newtheorem{cor}[thrm]{Corollary}
\newtheorem{dfn}[thrm]{Definition}
\newtheorem{conv}[thrm]{Convention}
\newcommand{\vnab}{\nabla_v}
\newcommand{\bnab}{\nabla_b}
\newcommand{\eqdef}{\overset{def}{=}}
\newcommand{\partt}{\frac{\partial}{\partial t}}
\def\bi{\nabla}
\newcommand{\vol}{\, Vol_{\eta}}
\newcommand{\Ncal}{\mathcal{N}}
\newcommand{\Wcal}{\mathcal{W}}
\begin{document}

\begin{abstract}
We establish in the present paper two sub-gradient estimates for the quaternionic contact (qc) heat equation on a compact qc manifold of dimension $4n+3$, provided some positivity conditions are satisfied. These are qc versions of the prominent Li-Yau gradient estimate in Riemannian geometry. 
Another goal of this paper is to get two Perelman-type entropy formulas for the qc heat equation on a compact qc-Einstein manifold of dimension $4n+3$ with non-negative qc scalar curvature (e.g. compact $3$-Sasakian manifold), as well as an integral sub-gradient estimate for the positive solutions of the qc heat equation.
\end{abstract}

\keywords{Quaternionic contact structures, Heat equation, Li-Yau sub-gradient estimates, Nash-type functionals, Perelman-type functionals, Perelman-type entropy formulas}
\subjclass[2010]{53C21,58J60,53C17,35P15,53C25}
\title[Li-Yau sub-gradient estimates and Perelman-type entropy formulas...]{Li-Yau sub-gradient estimates and Perelman-type entropy formulas for the heat equation in quaternionic contact geometry}
\date{\today }
\author{Stefan Ivanov}
\address[Stefan Ivanov]{University of Sofia, Faculty of Mathematics and
Informatics, blvd. James Bourchier 5, 1164, Sofia, Bulgaria}
\address{and Institute of Mathematics and Informatics, Bulgarian Academy of
Sciences} \email{ivanovsp@fmi.uni-sofia.bg}
\author{Alexander Petkov}
\address[Alexander Petkov]{University of Sofia, Faculty of Mathematics and
Informatics, blvd. James Bourchier 5, 1164, Sofia, Bulgaria}
\email{a\_petkov\_fmi@abv.bg}

\maketitle

\tableofcontents

\setcounter{tocdepth}{2}

\section{Introduction}
In the seminal paper  \cite{LY}, P. Li and S.-T. Yau established the
parabolic Li-Yau gradient estimate and Harnack inequality for the positive solution of the heat equation in a complete Riemannian manifold with nonnegative Ricci curvature. 

In his fundamental  paper G. Perelman \cite{Per} derived an entropy formula for Ricci flow. The
formula turns out being of fundamental importance in the study of Ricci flow (cf. \cite[Sections 3, 4, 10]{Per}). The derivation of the entropy
formula in \cite[Section 9]{Per} resembles the gradient estimate for the linear heat equation
proved by Li-Yau  on the linear parabolic equation.

The corresponding problems in CR-geometry, namely the sub-parabolic Li-Yau CR gradient estimate and the Perelman CR entropy formula  are developed in \cite{CKL}.

Here we investigate the quaternionic contact (qc) counterpart of the Li-Yau gradient estimate and the Perelman's entropy.

Quaternionic contact geometry is an example of sub-Riemannian geometry.  A quaternionic
contact (qc) structure, \cite{Biq1}, appears naturally as the
conformal boundary at infinity of the quaternionic hyperbolic
space and it is further developed in connection with finding  the extremals and the best constant in the $L^2$ Folland-Stein inequality on the quaternionic Heisenberg group and related qc Yamabe problem \cite{F2,FS,IMV,IMV2,IP,IMV3,IV}.

\emph{A quaternionic contact structure} 
(qc structure)  on a real (4n+3)-dimensional manifold $M$ is a codimension three distribution $H$ (\emph{the horizontal distribution}) locally given as the kernel of an $
\mathbb{R}^3$-valued one-form $\eta=(\eta_1,\eta_2,\eta_3)$, such
that the three two-forms $d\eta_i|_H$ are the fundamental forms of
a quaternionic Hermitian structure on $H$.
Any quaternionic contact manifold  admits a canonical connection $\nabla$ preserving the qc structure and having minimal torsion, called  the Biquard connection, which role in qc geometry is similar to  the Levi--Civita connection in Riemannian geometry and Tanaka--Webster connection in CR geometry. 

The basic concrete examples of qc-manifolds are provided by the extensively studied
3-Sasakian spaces and the quaternionic  Heisenberg group. As well known
\cite{BGM}, see also \cite{BG} for a recent complete account, 3-Sasakian manifolds are characterized as
Riemannian manifolds whose cone is a hyper-Kaehler manifold. These spaces are Einstein with positive Riemannian scalar curvature \cite{Kas}.

From the point of view of qc geometry, 3-Sasakian structures are qc manifolds whose torsion endomorphism of the Biquard connection vanishes. In turn, the latter property is equivalent to the qc structure being qc-Einstein,
i.e., the trace-free part of the qc-Ricci tensor vanishes, see \cite{IMV}. As a consequence of the second Bianchi identity for the Biquard connection it follows that for any qc-Einstein manifold the horizontal scalar curvature $S$ of the Biquard connection (the qc scalar curvature) is constant and if $S>0$ the qc-Einstein space is locally 3-Sasakian \cite{IMV,IMV1}.

We also recall that complete and regular 3-Sasakian spaces have canonical fibering with fiber $Sp(1)$ or $SO(3)$ and base a quaternionic Kaehler manifold of positive scalar curvature \cite{BGM,BG}.  It is shown in \cite{IV3} that if the qc scalar curvature of a regular qc-Einstein manifold is negative, $S<0$, 
then the space is ''essentially'' an $SO(3)$ bundle over quaternionic Kaehler manifold with negative scalar curvature,  described in \cite{Tan,Jel}. Similarly,  in the ''regular'' case,  a qc-Einstein manifold of zero scalar curvature, $S=0$, fibers over a
hyper-Kaehler manifold, see \cite[Proposition~6.3]{IMV1}. In particular, the quaternionic Heisenberg group is an $SO(3)$ bundle over the flat space.

Let $(M,g,\mathbb{Q})$ be a compact qc manifold. We consider the qc heat equation  \cite{Vas,Pet1,Pet2}
\begin{equation}\label{qcheat}
\frac{\partial}{\partial t}u=-\Delta_b u,
\end{equation}
where $u(x,t):M\times[0,\infty)\rightarrow\mathbb{R}$ is a smooth function and $\Delta_b:\mathcal{F}(M)\rightarrow\mathcal{F}(M)$ is the sub-Laplacian on $M.$

Let $$\vnab\varphi=\sum_{s=1}^3d\varphi(\xi_s)\xi_s$$ be   the vertical gradient of a function $\varphi\in\mathcal{F}(M)$. Our main result is the next 
\begin{thrm}\label{thrm1}
Let $(M,g,\mathbb{Q})$ be a compact qc manifold of dimension $4n+3$ and the positivity condition 
\begin{equation}\label{Pos}
2(n+2)Sg(X,X)+2nT^0(X,X)+4(n+4)U(X,X)\geq 0
\end{equation}
holds. Suppose that $u(x,t)$ is a positive solution of \eqref{qcheat}, satisfying
\begin{equation}\label{commut}
(\nabla^3u)(e_a,e_a,\nabla_v u)=-\vnab u(\Delta_b u).
\end{equation}
For $f(x,t)=\ln u(x,t)$  the following sub-gradient estimate holds
\begin{equation}\label{sub1}
|\bnab f|^2-\alpha\partt f+\frac{8n}{3}t|\vnab f|^2\leq\frac{2n\alpha^2}{t}, \quad \alpha=\frac{9+2n}{2n},
\end{equation}
where we apply Convention~\ref{conv}  about summation rules, and the torsion tensors $T^0$ and $U$, as well as the normalized qc scalar curvature $S,$ are defined below in \eqref{Tcompnts} and \eqref{qscs}. 
\end{thrm}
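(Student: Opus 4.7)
The plan is to adapt to the qc setting the classical Li--Yau maximum principle argument. Setting $f=\ln u$ turns \eqref{qcheat} into a first-order nonlinear equation relating $\partt f$, $\Delta_b f$ and $|\bnab f|^2$. On $M\times[0,T]$ I introduce the auxiliary function
\[
F\ \eqdef\ t\bigl(|\bnab f|^2-\alpha\,\partt f\bigr)+\tfrac{8n}{3}\,t^2|\vnab f|^2,
\]
which vanishes at $t=0$, so that \eqref{sub1} reduces to $F\le 2n\alpha^2 t$ on $M\times(0,T]$. Compactness of $M$ lets me restrict attention to an interior maximum $(x_0,t_0)$ of $F$, at which $\bnab F=0$, $\partt F\ge 0$, and $\Delta_b F$ carries the sign forced by the heat operator in \eqref{qcheat}.

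The next task is to compute $(\partial_t\pm\Delta_b)F$ by applying the qc Bochner identity for the Biquard connection to $|\bnab f|^2$. It contributes the horizontal Hessian term $2|\nabla^2_b f|^2$, a curvature quadratic form on $H$ whose coefficient is exactly $2(n+2)S\,g+2nT^0+4(n+4)U$ (nonnegative thanks to \eqref{Pos}), and a mixed third-order term involving $(\nabla^3 f)(e_a,e_a,\vnab f)$. Here the commutation hypothesis \eqref{commut} is essential: it converts that third-order term into $-\vnab f\cdot\Delta_b f$, which couples cleanly with $|\vnab f|^2$ and with the time derivative supplied by the heat equation. An analogous computation governs the evolution of $|\vnab f|^2$.

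Combining these identities with the elementary trace bound $|\nabla^2_b f|^2\ge (\Delta_b f)^2/(4n)$ on the $4n$-dimensional horizontal distribution turns the differential inequality $\partial_t F\mp\Delta_b F\ge 0$ at $(x_0,t_0)$ into a polynomial inequality in $|\bnab f|^2$, $\partt f$ and $|\vnab f|^2$. The relation $\bnab F(x_0,t_0)=0$ is then used to trade the cross term $\langle\bnab f,\bnab\partt f\rangle$ for a multiple of $|\bnab f|^4$, after which a Young-type inequality $2ab\le\alpha a^2+\alpha^{-1}b^2$ with the distinguished value $\alpha=(9+2n)/(2n)$ yields the desired bound $F(x_0,t_0)\le 2n\alpha^2$.

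The main obstacle will be the simultaneous calibration of $\alpha$ and of the coefficient $8n/3$: both are forced by the requirement that, after using \eqref{commut} and \eqref{Pos}, the vertical contribution to the qc Bochner identity (which has no Riemannian analogue) be exactly absorbed. Careful bookkeeping of the signs of the Biquard torsion components $T^0$ and $U$, together with the commutators between horizontal and vertical derivatives, is where the technical work will concentrate.
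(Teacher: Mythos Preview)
Your plan is essentially the paper's own strategy: introduce the Li--Yau test function $F$, apply the qc Bochner formula together with the curvature hypothesis \eqref{Pos}, use the commutation assumption \eqref{commut} to kill the extra torsion/vertical-Ricci terms that appear when you commute $\Delta_b$ with a Reeb derivative, and then apply the parabolic maximum principle. So the outline is correct.

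Two points where your description diverges from what actually has to happen. First, the paper does \emph{not} fix the vertical coefficient to $8n/3$ from the outset; it works with a free parameter $c\in(0,8n/3)$, runs the contradiction argument (assume $F$ hits the value $16n^2\alpha^2/(3c)$ at some first time $t_0$, plug into the differential inequality, obtain a strict sign violation because $8n-3c>0$), and only afterwards lets $c\to 8n/3$. If you set $c=8n/3$ directly, the would-be contradiction degenerates to $0\ge 0$ and you cannot conclude. Second, the value $\alpha=(9+2n)/(2n)$ is not the parameter in the Young/Cauchy--Schwarz step; that parameter is $\nu$, and one takes $\nu=ct/2$ so that the dangerous mixed term $\sum_s[(\nabla^2 f)(e_a,\xi_s)]^2$ is exactly absorbed by the vertical contribution coming from the $t^2|\vnab f|^2$ part of $F$. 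The specific $\alpha$ is instead forced by the requirement that, at the critical level $F=16n^2\alpha^2/(3c)$, the coefficient of $|\bnab f|^2$ in the resulting inequality vanish identically (this is where the ``9'' in $9+2n$ comes from: it balances the residual $24/(cs)$ against $(\alpha-1)\cdot 16n/(3cs)$). So both constants are pinned down, but by two separate cancellations rather than a single Young inequality.
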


As a consequence of Theorem~\ref{thrm1} we get the next
\begin{cor}\label{cor1}
Let $(M,g,\mathbb{Q})$ be a compact  qc-Einstein manifold of dimension $4n+3$ with non-negative constant qc scalar curvature, $S\ge 0$. Supose that $u(x,t)$ is a positive solution of \eqref{qcheat}. Then $f(x,t)=\ln u(x,t)$ satisfies the sub-gradient estimate \eqref{sub1}.

In particular, on a compact  3-Sasakian manifold the function  $f(x,t)$  satisfies the sub-gradient estimate \eqref{sub1}.
\end{cor}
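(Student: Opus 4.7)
The plan is to deduce Corollary~\ref{cor1} directly from Theorem~\ref{thrm1}, by checking that on a compact qc-Einstein manifold with $S\geq 0$ both hypotheses of that theorem, namely the positivity condition \eqref{Pos} and the commutation identity \eqref{commut}, are automatically satisfied.

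For the positivity condition, I would use the characterization recalled in the introduction: a qc manifold is qc-Einstein exactly when the torsion endomorphism of the Biquard connection vanishes. In terms of the torsion tensors that appear in \eqref{Pos} (defined later in \eqref{Tcompnts}), this means $T^0\equiv 0$ and $U\equiv 0$ on $M$. Substituting this into the left-hand side of \eqref{Pos} reduces the inequality to
\[
2(n+2)S\, g(X,X)\geq 0,
\]
which holds for every horizontal $X$ because $S\geq 0$ by hypothesis (and is constant on a qc-Einstein space, as noted in the introduction) and the sub-Riemannian metric $g$ is positive definite on $H$.

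For the commutation identity \eqref{commut}, I would appeal to the standard Ricci-type commutation formula on a qc manifold, which expresses the difference $\sum_a(\nabla^3 u)(e_a,e_a,\xi_s)+\xi_s(\Delta_b u)$ as a contraction of first and second horizontal derivatives of $u$ against the torsion tensors $T^0$ and $U$ of the Biquard connection, together with curvature terms that cancel by the symmetries of the Biquard curvature. On a qc-Einstein manifold $T^0$ and $U$ vanish identically, so the commutator produces no obstruction, and after multiplying by $\xi_s u$ and summing over $s=1,2,3$ one obtains exactly the identity \eqref{commut}. The main technical step, and the place I expect the only real obstacle, is the careful bookkeeping of these commutator terms: one has to trace through the definition of $\nabla^3 u$ in the Biquard connection, commute one horizontal derivative past a vertical $\xi_s$, and verify that every resulting extra term is a contraction against $T^0$ or $U$ and hence disappears in the qc-Einstein setting.

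Finally, for the second part of the corollary I would invoke the fact, recalled in the introduction (see \cite{IMV,IMV1,Kas}), that every compact 3-Sasakian manifold is qc-Einstein with constant positive qc scalar curvature $S>0$. Thus a compact 3-Sasakian manifold satisfies the assumptions of the first part of the corollary, and the sub-gradient estimate \eqref{sub1} for $f=\ln u$ follows at once.
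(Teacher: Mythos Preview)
Your overall strategy is exactly the paper's: verify \eqref{Pos} and \eqref{commut} on a qc-Einstein manifold and then invoke Theorem~\ref{thrm1}. The treatment of \eqref{Pos} and of the 3-Sasakian case is correct.

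There is one imprecision in your argument for \eqref{commut}. The commutator identity you need is precisely Lemma~\ref{Auxiliary3}, namely
\[
(\nabla^3f)(e_a,e_a,\nabla_v f)+\nabla_v f(\Delta_b f)
=2g(T_{\nabla_v f},\nabla^2 f)-(\nabla_{e_a}T)(e_a,\nabla_v f,\nabla_b f)+Ric(\nabla_v f,\nabla_b f).
\]
The first two terms on the right are indeed contractions against the torsion endomorphism and vanish on a qc-Einstein space. But the last term is the \emph{mixed} Ricci $Ric(\xi_s,X)$, and this does \emph{not} cancel by any general symmetry of the Biquard curvature, nor is it simply a contraction against $T^0$ or $U$. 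Its vanishing uses the qc-Einstein structure more substantially: by the fourth line of \eqref{sixtyfour} together with the integrability of the vertical distribution (a consequence of qc-Einstein, cf.\ Theorem~\ref{3sas}) one gets $\rho_t(\xi_s,X)=0$ for $s\neq t$, and then the fifth line of \eqref{sixtyfour} forces $Ric(\xi_s,X)=0$. This is exactly what the paper does in Lemma~\ref{lc2}. Once you replace ``cancel by the symmetries of the Biquard curvature'' with this argument, your proof is complete and coincides with the paper's.
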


Our second main result concerns the case when the a priori lower bound \eqref{Pos} of Theorem~\ref{thrm1} is replaced by a negative constant, namely, we state the 
\begin{thrm}\label{thrm2}
Let $(M,g,\mathbb{Q})$ be a compact qc manifold of dimension $4n+3$ satisfying the positivity condition
\begin{equation}\label{PosA}
2(n+2)Sg(X,X)+2nT^0(X,X)+4(n+4)U(X,X)\geq -kg(X,X),
\end{equation}
where $k>0$ is a constant. Suppose that $u(x,t)$ is a positive solution of \eqref{qcheat}, satisfying \eqref{commut}. Then the following sub-gradient estimate for  $f(x,t)=\ln u(x,t)$ holds:
\begin{equation}\label{sub2}
|\bnab f|^2-\alpha_k\partt f<\frac{n\alpha_k^2(k+2)}{t}+\frac{8n^2\alpha_k^2(k+2)}{k+3}, \quad \alpha_k=\frac{9+2n(k+1)}{2n}.
\end{equation}
\end{thrm}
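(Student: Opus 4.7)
The argument will parallel the proof of Theorem~\ref{thrm1}, with modifications designed to absorb the negative lower bound $-kg(X,X)$ appearing in \eqref{PosA}. First, set $f=\ln u$, so that the qc heat equation \eqref{qcheat} translates into an evolution equation of the form $\partt f = -\Delta_b f - |\bnab f|^2$. Then introduce an auxiliary function of the Li--Yau type, e.g.
\[
F(x,t)\;=\;t\bigl(|\bnab f|^2-\alpha_k\partt f\bigr) + c\, t^2 |\vnab f|^2,
\]
for a suitable constant $c>0$ (to be fixed by the computation), and compute $(\partt+\Delta_b)F$ using the qc Bochner-type identity that was already developed to prove Theorem~\ref{thrm1}.

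In the resulting identity the curvature-torsion contribution is precisely
\[
2(n+2)S\,g(\bnab f,\bnab f)+2n\,T^0(\bnab f,\bnab f)+4(n+4)\,U(\bnab f,\bnab f),
\]
so that under \eqref{PosA} it is bounded below by $-k|\bnab f|^2$. The commutation condition \eqref{commut} is used exactly as in Theorem~\ref{thrm1} to reduce the undesirable third-order term $(\nabla^3 f)(e_a,e_a,\vnab f)$ to the product $-\vnab f(\Delta_b f)$, which can then be regrouped with the evolution of $|\vnab f|^2$. The outcome is a differential inequality of the schematic form
\[
(\partt+\Delta_b)F \;\geq\; \Phi\bigl(|\bnab f|^2,\,\partt f,\,|\vnab f|^2\bigr)\;-\;k\,t\,|\bnab f|^2 \;-\;\frac{F}{t},
\]
where $\Phi$ is a quadratic form whose positive-definiteness is controlled by the choice of $\alpha_k$.

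Next, apply the parabolic maximum principle to $F$ on $M\times[0,T]$ for arbitrary $T>0$. At an interior maximum point $(x_0,t_0)$ one has $\partt F\geq 0$, $\bnab F=0$, and $\Delta_b F\leq 0$, which converts the differential inequality into a purely algebraic inequality in $|\bnab f|^2$, $\partt f$, and $|\vnab f|^2$ at $(x_0,t_0)$. The parameter $\alpha_k$ is now determined by requiring that, in the presence of the extra $-k\,t_0|\bnab f|^2$ term, this quadratic inequality still admit an upper bound for $F(x_0,t_0)$; a short discriminant computation pins down
\[
\alpha_k=\frac{9+2n(k+1)}{2n},
\]
exactly recovering $\alpha$ from Theorem~\ref{thrm1} in the limit $k\to 0$. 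Completing the square then produces two contributions to the bound: the time-decaying piece $\tfrac{n\alpha_k^2(k+2)}{t}$, inherited from the explicit $1/t$ scaling of $F$, and a time-independent piece coming from the $k$-dependent residue; balancing the latter against the vertical gradient term fixes the constant $c$ above and yields the factor $\tfrac{8n^2\alpha_k^2(k+2)}{k+3}$.

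The principal obstacle is the algebraic bookkeeping of the $k$-perturbation: in Theorem~\ref{thrm1} the positivity of the curvature-torsion combination lets us discard a nonnegative term and leaves a clean bound $\tfrac{2n\alpha^2}{t}$, whereas here the $-k|\bnab f|^2$ contribution feeds back into the quadratic form $\Phi$ and must be absorbed by enlarging $\alpha$, at the cost of an additive constant. Making this absorption optimal — and in particular arriving at the specific rational functions of $k$ in the statement — is the delicate part; once $\alpha_k$ is correctly identified, the rest of the proof is a parallel of Theorem~\ref{thrm1}.
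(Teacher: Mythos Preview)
Your outline captures the overall strategy---define a Li--Yau test function with a vertical-gradient correction, apply the qc Bochner inequality of Lemma~\ref{BochnerIn}, use \eqref{commut} (via Lemma~\ref{Auxiliary2}) to kill the $V(f)$ term, and run the maximum principle---and this is indeed the skeleton of the paper's proof. However, there is a genuine gap in how you propose to extract the two-term bound in \eqref{sub2}.

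You expect to fix a single constant $c>0$ and obtain both the $1/t$ piece and the time-independent piece simultaneously from one quadratic inequality at the maximum point. The paper does \emph{not} proceed this way, and the obstruction is visible already in the key inequality \eqref{aux5} (or its max-point version \eqref{CrucialP}): the coefficient of $|\bnab f|^2$ carries both a $-2k$ term, which persists for all $t$, and a $-\tfrac{24}{cs(t)}$ term, which blows up as $s(t)\to 0$, while the coefficient of $|\vnab f|^2$ must be kept nonnegative, and that imposes an upper bound on $c$ depending on the size of $F$. These constraints cannot be balanced with a single constant $c$ uniformly in $s(t)$. The paper therefore splits according to whether the time horizon $\Theta$ exceeds $\tfrac{k+3}{8n}$: for large $\Theta$ one takes $c=\tfrac{1}{\Theta}$ (so $c$ is \emph{not} a fixed constant) and a contradiction argument yields the time-independent bound $\tfrac{8n^2\alpha_k^2(k+2)}{k+3}$; for small $\Theta$ one lets $c\to\tfrac{8n}{k+3}$ and obtains the $1/t$ bound $\tfrac{n\alpha_k^2(k+2)}{t}$. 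The estimate \eqref{sub2} is then simply the sum of the two case bounds. Your sketch, as written, would not discover this split, and the claim that ``completing the square'' delivers both pieces at once is exactly where the argument would stall.

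Two minor sign issues relative to the paper's conventions: since $\Delta_b f=-\nabla^2f(e_a,e_a)$, equation \eqref{nablagradf} reads $(\Delta_b+\partt)f=|\bnab f|^2$, so your evolution equation for $f$ has the wrong sign on $|\bnab f|^2$; and at an interior maximum one has $-\Delta_b F\leq 0$, not $\Delta_b F\leq 0$.
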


As a simple consequence of Theorem~\ref{thrm2} we obtain the following

\begin{cor}\label{cor3}
Let $(M,g,\mathbb{Q})$ be a compact  qc-Einstein manifold of dimension $4n+3$ with negative qc scalar curvature $S=-\frac{k}{2(n+2)},$ where $k>0$ is a constant. Suppose that $u(x,t)$ is a positive solution of \eqref{qcheat}. Then $f(x,t)=\ln u(x,t)$ satisfies the sub-gradient estimate \eqref{sub2}.
\end{cor}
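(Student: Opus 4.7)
The plan is to reduce the statement to a direct application of Theorem~\ref{thrm2} by checking that both of its hypotheses, namely the positivity inequality \eqref{PosA} and the commutator identity \eqref{commut}, are automatically satisfied on a qc-Einstein manifold with qc scalar curvature $S=-k/(2(n+2))$.

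First I verify \eqref{PosA}. Since the qc-Einstein condition is equivalent to the vanishing of the torsion endomorphism of the Biquard connection, we have $T^0\equiv 0$ and $U\equiv 0$, and $S$ is constant. Substituting $S=-k/(2(n+2))$ in the left-hand side of \eqref{PosA} collapses it to $-k\,g(X,X)$, so the inequality holds with equality for every horizontal $X$.

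Next I verify the commutator identity \eqref{commut}. It is expected to follow from the Ricci-type commutation formula for third horizontal covariant derivatives of a scalar function against a vertical direction, applied under the Biquard connection. A direct computation expresses the difference
$$(\nabla^3 u)(e_a,e_a,\xi_s)+\xi_s(\Delta_b u)$$
as a linear combination of torsion contractions of $T^0$ and $U$ paired against $\xi_s$, together with horizontal curvature contractions controlled by the trace-free qc-Ricci tensor. Both families of terms vanish under the qc-Einstein assumption, yielding $(\nabla^3 u)(e_a,e_a,\xi_s)=-\xi_s(\Delta_b u)$ for each $s=1,2,3$; contracting with $\xi_s(u)$ and summing over $s$ produces \eqref{commut}.

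With both hypotheses verified, Theorem~\ref{thrm2} applies to $u$ with the same constant $k$ and the same $\alpha_k=(9+2n(k+1))/(2n)$, delivering the sub-gradient estimate \eqref{sub2} for $f=\ln u$. The one step that demands genuine work is the vanishing of the torsion and curvature corrections in the commutation formula above; this is precisely the identity that must also underlie the parallel Corollary~\ref{cor1}, and should be available once the commutation lemmas used in the proofs of Theorem~\ref{thrm1} and Theorem~\ref{thrm2} are in place.
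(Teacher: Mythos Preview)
Your approach matches the paper's exactly: verify \eqref{PosA} (it holds with equality once $T^0=U=0$ and $S=-k/(2(n+2))$) and \eqref{commut}, then invoke Theorem~\ref{thrm2}; the paper packages the second verification as Lemma~\ref{lc2}. One correction to your heuristic for \eqref{commut}: the curvature correction in the commutation identity (the paper's \eqref{aux3}) is the \emph{mixed} Ricci term $Ric(\nabla_v f,\nabla_b f)$, not a horizontal contraction governed by the trace-free qc-Ricci; its vanishing on a qc-Einstein space uses the fourth and fifth identities of \eqref{sixtyfour} together with the integrability of the vertical distribution, not merely the vanishing of the horizontal trace-free Ricci.
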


Furthermore, following the Riemannian and CR cases \cite{Per,CKL}, we define the \emph{Nash--type functionals}
\begin{gather}\label{Nash1}
\Ncal(u,t)=-\int_M(\ln u)u\vol,\\
\tilde{\Ncal}(u,t)=\Ncal(u,t)-2n\alpha^2a[\ln(4\pi t)+1]\label{Nash2},
\end{gather}
as well as the \emph{Perelman--type functionals}
\begin{gather}\label{Perelman1}
\Wcal(u,t)=\int_M[t|\bnab\varphi|^2+\varphi-4n\alpha^2a]u\vol,
\\\label{Perelman2}
\tilde{\Wcal}(u,t)=\Wcal(u,t)+4nt^2\int_M|\vnab\varphi|^2u\vol.
\end{gather}
In the expressions \eqref{Nash1}, \eqref{Nash2}, \eqref{Perelman1} and \eqref{Perelman2} $u(x,t)$ is a positive solution of \eqref{qcheat}, normalized as 
\begin{equation}\label{normu}
\int_Mu\vol=1, \quad  \quad \textnormal{expressed by} 
\end{equation}
\begin{equation}\label{heatkern}
u(x,t)=\frac{e^{-\varphi(x,t)}}{(4\pi t)^{2n\alpha^2a}}
\end{equation} 
for a suitable function $\varphi(x,t):M\times[0,\infty)\rightarrow\mathbb{R},$ where $\alpha=\frac{9+2n}{2n}$ and $a$ is a real constant to be determined. 
As a consequence of  Corollary~\ref{cor1} we obtain two Perelman-type entropy formulas concerning $\tilde{\Ncal}(u,t)$ and $\tilde{\Wcal}(u,t).$ 

Namely, the first one states as follows
\begin{thrm}\label{thrm3}
Let $(M,g,\mathbb{Q})$ be a compact  qc-Einstein $(4n+3)$-dimensional manifold  with  non-negative qc scalar curvature. Let  $u(x,t)$ be a positive solution of \eqref{qcheat}, satisfying \eqref{normu}. Then we have
\begin{equation}\label{Nashentropy}
\frac{d}{dt}\tilde{\Ncal}(u,t)=\int_M\Big[|\bnab\varphi|^2+\alpha\partt\varphi+\frac{2n(\alpha-1)\alpha^2a}{t}\Big]u\vol\leq 0
\end{equation}
for $t\in(0,\infty)$ and $a\geq1.$

In particular, on a compact  3-Sasakian manifold any positive solution to the qc heat equation \eqref{qcheat} normalized by \eqref{normu} satisfies the inequality \eqref{Nashentropy}.
\end{thrm}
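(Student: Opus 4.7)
The plan is to compute $\frac{d}{dt}\tilde{\Ncal}(u,t)$ directly from its definition, recast the resulting expression in the form appearing in \eqref{Nashentropy} using the evolution equation satisfied by $\varphi$, and then establish non-positivity by invoking Corollary~\ref{cor1}.

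First I would substitute the ansatz \eqref{heatkern} into the Nash-type functional: since $\ln u=-\varphi-2n\alpha^2 a\ln(4\pi t)$ and $\int_M u\vol=1$ by \eqref{normu}, a direct calculation yields the compact expression
\begin{equation*}
\tilde{\Ncal}(u,t)=\int_M \varphi\,u\vol-2n\alpha^2 a.
\end{equation*}
Differentiating in $t$, applying the heat equation \eqref{qcheat}, integrating by parts (no boundary terms, as $M$ is compact), and using the identity $\bnab u=-u\bnab\varphi$ read off from \eqref{heatkern} gives
\begin{equation*}
\frac{d}{dt}\tilde{\Ncal}(u,t)=\int_M\bigl[\partt\varphi+|\bnab\varphi|^2\bigr]u\vol.
\end{equation*}
Next, inserting \eqref{heatkern} into \eqref{qcheat} and using $\Delta_b u=-u|\bnab\varphi|^2-u\Delta_b\varphi$ produces the nonlinear evolution equation
\begin{equation*}
\Delta_b\varphi+|\bnab\varphi|^2+\partt\varphi+\frac{2n\alpha^2 a}{t}=0,
\end{equation*}
and integrating this identity against $u$, together with $\int_M(\Delta_b\varphi)u\vol=-\int_M u|\bnab\varphi|^2\vol$, supplies the normalization $\int_M\partt\varphi\,u\vol=-\frac{2n\alpha^2 a}{t}$. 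Adding the vanishing quantity $(\alpha-1)\bigl[\int_M\partt\varphi\,u\vol+\frac{2n\alpha^2 a}{t}\bigr]$ to the expression derived above for $\frac{d}{dt}\tilde{\Ncal}(u,t)$ reproduces the equality asserted in \eqref{Nashentropy}.

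To establish the inequality I would then rewrite the integrand in terms of $f=\ln u$. A short algebraic rearrangement based on $f=-\varphi-2n\alpha^2 a\ln(4\pi t)$ shows
\begin{equation*}
|\bnab\varphi|^2+\alpha\partt\varphi+\frac{2n(\alpha-1)\alpha^2 a}{t}=|\bnab f|^2-\alpha\partt f-\frac{2n\alpha^2 a}{t}.
\end{equation*}
Because $(M,g,\mathbb{Q})$ is qc-Einstein with $S\geq 0$, Corollary~\ref{cor1} is applicable and provides the pointwise estimate $|\bnab f|^2-\alpha\partt f\leq\frac{2n\alpha^2}{t}-\frac{8n}{3}t|\vnab f|^2$. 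Hence the integrand is bounded above by $\frac{2n\alpha^2(1-a)}{t}-\frac{8n}{3}t|\vnab f|^2$, which is non-positive as soon as $a\geq 1$; integrating against the positive density $u\vol$ delivers \eqref{Nashentropy}, and the $3$-Sasakian case is automatic since such manifolds are qc-Einstein with positive qc scalar curvature. The main obstacle is really only careful sign bookkeeping: the heat equation is written with a minus sign in front of the non-negative sub-Laplacian $\Delta_b$, and the two integration-by-parts identities used above must be applied with the correct signs before the cancellations leading to \eqref{Nashentropy} become apparent.
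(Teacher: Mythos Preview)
Your proof is correct and follows essentially the same route as the paper: both compute $\frac{d}{dt}\tilde{\Ncal}(u,t)$ by differentiating under the integral, using the heat equation and integration by parts, and then deduce the non-positivity from the pointwise sub-gradient estimate of Corollary~\ref{cor1} together with the hypothesis $a\ge 1$. The only cosmetic differences are that you first simplify $\tilde{\Ncal}(u,t)=\int_M\varphi\,u\vol-2n\alpha^2 a$ and then add the vanishing term $(\alpha-1)\bigl[\int_M\partt\varphi\,u\vol+\tfrac{2n\alpha^2 a}{t}\bigr]$ to reach \eqref{Nashentropy}, whereas the paper carries out a longer chain of substitutions via \eqref{partfnablaf}; and for the inequality you bound the integrand by $\tfrac{2n\alpha^2(1-a)}{t}-\tfrac{8n}{3}t|\vnab f|^2$, while the paper equivalently compares $\tfrac{2n\alpha^2(\alpha-1)a}{t}$ with $\tfrac{2n\alpha^2(\alpha a-1)}{t}$.
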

As a consequence of Theorem~\ref{thrm3} we obtain the following integral version of the sub-gradient estimate:
\begin{cor}\label{cor2}
Let $(M,g,\mathbb{Q})$ be a compact  qc-Einstein manifold of dimension $4n+3$ with  non-negative qc scalar curvature. Suppose that $u(x,t)$ is a positive solution of \eqref{qcheat}. Then there exists a positive constant $C$ s.t. the following estimate holds:
\begin{equation}\label{integralsub}
\int_M|\bnab u^\frac12|^2\vol\leq\frac{C}{t}.
\end{equation}
In particular,  on a compact  3-Sasakian manifold any positive solution $u(x,t)$ of the qc heat equation \eqref{qcheat} satisfies the integral estimate \eqref{integralsub}.
\end{cor}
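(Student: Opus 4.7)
The plan is to obtain \eqref{integralsub} as a direct consequence of the Nash–type entropy inequality \eqref{Nashentropy} of Theorem~\ref{thrm3}, by rewriting the integrand pointwise in terms of $u$ and invoking conservation of mass. First I would fix $a=1$ in the normalization \eqref{heatkern}, so that $\varphi=-\ln u-2n\alpha^2\ln(4\pi t)$. Differentiating along the horizontal distribution gives $\bnab\varphi=-u^{-1}\bnab u$, while $\bnab u^{1/2}=\tfrac12 u^{-1/2}\bnab u$, hence the pointwise identity
$$
|\bnab\varphi|^2\,u \;=\; 4\,|\bnab u^{1/2}|^2 .
$$
Differentiating in $t$ gives $\partt\varphi=-\partt\ln u-2n\alpha^2/t$, so $u\,\partt\varphi=-\partt u-(2n\alpha^2/t)\,u$.

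Next I would integrate over $M$ against $\vol$. The term $\int_M\partt u\,\vol$ equals $\tfrac{d}{dt}\int_M u\,\vol$, which vanishes: the normalization \eqref{normu} is preserved by the flow, because integrating \eqref{qcheat} and using the qc divergence theorem on the closed manifold $M$ yields $\int_M\Delta_b u\,\vol=0$. Combined with $\int_M u\,\vol=1$, the inequality \eqref{Nashentropy} of Theorem~\ref{thrm3} becomes
$$
4\int_M|\bnab u^{1/2}|^2\,\vol \;-\; \frac{2n\alpha^3}{t} \;+\; \frac{2n(\alpha-1)\alpha^2}{t} \;\leq\; 0 .
$$
The two $1/t$ contributions collapse to $-2n\alpha^2/t$, and rearranging gives
$$
\int_M|\bnab u^{1/2}|^2\,\vol \;\leq\; \frac{n\alpha^2}{2\,t},
$$
so the assertion holds with $C=n\alpha^2/2=(9+2n)^2/(8n)$, which is independent of the solution. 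The $3$-Sasakian case follows immediately, since such manifolds are qc-Einstein with positive qc scalar curvature.

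There is essentially no hard step: Theorem~\ref{thrm3} already absorbs all the geometric and analytic difficulty (the Bochner-type computation, the choice of the constant $\alpha$, and the non-negativity hypothesis on the qc scalar curvature enter only through the cited entropy formula). The only point deserving care is justifying $\partt\int_M u\,\vol=0$, which is routine on a compact qc manifold but should be noted explicitly, and verifying that the constant arithmetic for the $1/t$ terms is correct after the substitution $a=1$; choosing any $a\geq 1$ would only enlarge $C$, but $a=1$ is optimal for this particular estimate.
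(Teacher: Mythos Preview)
Your proof is correct and follows essentially the same route as the paper: both extract the estimate from the inequality $\frac{d}{dt}\tilde{\Ncal}\leq 0$ of Theorem~\ref{thrm3} together with the identity $|\bnab\varphi|^2\,u=4|\bnab u^{1/2}|^2$, the only cosmetic difference being that the paper evaluates $\frac{d}{dt}\tilde{\Ncal}$ directly from the definition \eqref{Nash2} (getting $4\int_M|\bnab u^{1/2}|^2\vol-2n\alpha^2a/t$) rather than through the integrand displayed in \eqref{Nashentropy}. One small correction: your constant $C=n\alpha^2/2$ is universal only for \emph{normalized} solutions; the paper first remarks that \eqref{normu} may be assumed without loss of generality by rescaling, and for a general positive solution $C$ carries a factor of the (conserved) mass $\int_M u\,\vol$.
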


The second one is the content of the following
\begin{thrm}\label{thrm4}
Let $(M,g,\mathbb{Q})$ be a compact  qc-Einstein manifold of dimension $4n+3$ with  non-negative qc scalar curvature. Let $u(x,t)$ be a positive solution of \eqref{qcheat}, satisfying \eqref{normu}. 
Then we have
\begin{multline}\label{Perelmanentropy}
\frac{d}{dt}\tilde{\Wcal}(u,t)\leq-2t\int_Mu|(\nabla^2\varphi)_{[-1][sym]}|^2\vol\\-\frac{t}{2n}\int_Mu(\Delta_b\varphi)^2\vol-4(n+2)St\int_Mu|\bnab\varphi|^2\vol+\frac{2(2n-na+6)\alpha^2}{t}\leq 0
\end{multline}
for $t\in(0,\infty)$ and $a\geq\frac{2n+6}{n}.$

In particular, on a compact  3-Sasakian manifold any positive solution to the qc heat equation \eqref{qcheat} normalized by \eqref{normu} satisfies the inequality \eqref{Perelmanentropy}.
\end{thrm}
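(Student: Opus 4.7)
The plan is to differentiate $\tilde{\Wcal}(u,t)$ directly in $t$, substitute the evolution equation that $\varphi$ must satisfy, and then use a qc Bochner-type formula together with Corollary~\ref{cor1} to package the result into the asserted inequality. The overall scheme parallels Perelman's original strategy \cite{Per} and its CR adaptation in \cite{CKL}, but the qc setting forces us to track both horizontal and vertical contributions, which is precisely why the correction $4nt^2\int_M|\vnab\varphi|^2u\vol$ appears in $\tilde{\Wcal}$.

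First, I would derive the constraint equation for $\varphi$. From $u=e^{-\varphi}/(4\pi t)^{2n\alpha^2 a}$ together with $\partial_t u=-\Delta_b u$, and $\nabla_b u=-u\nabla_b\varphi$, one obtains
\begin{equation*}
\partial_t\varphi=|\bnab\varphi|^2-\Delta_b\varphi-\frac{2n\alpha^2 a}{t}.
\end{equation*}
Second, I would compute $\frac{d}{dt}\Wcal(u,t)$ and $\frac{d}{dt}\bigl(4nt^2\int_M|\vnab\varphi|^2u\vol\bigr)$ separately, using integration by parts against the heat equation and the fact that $\int_M u\vol=1$ is preserved. The normalization term $4n\alpha^2 a$ disappears, and the remaining pieces produce an integral involving $\Delta_b|\bnab\varphi|^2$, $(\Delta_b\varphi)^2$, a $t^{-1}$ remainder from the dilation factor, and vertical Bochner contributions coming from the commutator $[\partt,\Delta_b]$-type terms on the vertical gradient.

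The central analytic input is the qc Bochner identity for $|\bnab\varphi|^2$ applied under the qc-Einstein assumption. Because $T^0=U=0$ and $S$ is constant, the curvature contribution collapses to $2(n+2)S|\bnab\varphi|^2$, and the decomposition of the horizontal Hessian into its $[-1]$-symmetric part and its trace together with the elementary bound
\begin{equation*}
|\nabla^2\varphi|^2\geq |(\nabla^2\varphi)_{[-1][sym]}|^2+\frac{1}{2n}(\Delta_b\varphi)^2
\end{equation*}
yields the three manifestly non-positive terms on the right hand side of \eqref{Perelmanentropy}. The assumption \eqref{commut} is what allows the vertical derivative terms generated here to combine exactly with the derivative of the $4nt^2\int_M|\vnab\varphi|^2u\vol$ correction, proving the first inequality in \eqref{Perelmanentropy}.

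For the second inequality I would invoke Corollary~\ref{cor1}. Integrating the pointwise estimate \eqref{sub1} against $u\vol$ and using $\int_M u\vol=1$ produces
\begin{equation*}
\int_M\Big[|\bnab\varphi|^2-\alpha\partt\varphi+\frac{8n}{3}t|\vnab\varphi|^2\Big]u\vol\leq\frac{2n\alpha^2}{t},
\end{equation*}
which, when combined with the explicit residual $\frac{2(2n-na+6)\alpha^2}{t}$ and the choice $a\geq\frac{2n+6}{n}$, forces the coefficient $(2n-na+6)$ to be non-positive and closes the estimate. The principal obstacle I anticipate is the bookkeeping of the qc Bochner identity: one must verify that every torsion or commutator term produced in Step~2 is either absorbed by the $\vnab\varphi$-correction, cancelled by integration by parts, or dominated by the sub-gradient estimate of Corollary~\ref{cor1}; the delicate point is to match coefficients so that the residual coming from the sub-Laplacian comparison matches exactly the $\frac{2(2n-na+6)\alpha^2}{t}$ term, with $a=\frac{2n+6}{n}$ being the sharp threshold.
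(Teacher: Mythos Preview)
Your overall strategy matches the paper's: differentiate $\tilde{\Wcal}$, apply the qc Bochner identity (which simplifies since $T^0=U=0$), decompose the Hessian, and close with the integrated sub-gradient estimate from Corollary~\ref{cor1}. The paper organizes the computation through the identity $\Wcal(u,t)=\frac{d}{dt}\bigl(t\tilde{\Ncal}(u,t)\bigr)$, hence $\frac{d}{dt}\Wcal=2\tilde{\Ncal}'+t\tilde{\Ncal}''$, but this is a bookkeeping choice and your direct differentiation would reach the same place.

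There is, however, a genuine gap in your handling of the vertical terms. You assert that the mixed terms from the Bochner formula ``combine exactly'' with the derivative of the correction $4nt^2\int_M|\vnab\varphi|^2u\vol$, but this is not an identity and requires a specific mechanism. In the Bochner formula the cross term $4\sum_s(\nabla^2\ln u)(\xi_s,I_s\bnab\ln u)$ must first be estimated via Cauchy--Schwarz with a free parameter $\nu>0$, producing a loss $-\frac{6}{\nu}|\bnab\ln u|^2-2\nu\sum_s\bigl[(\nabla^2\ln u)(e_a,\xi_s)\bigr]^2$. Separately, differentiating the vertical correction and using the qc-Einstein hypothesis (which forces \eqref{commut} via Lemma~\ref{lc2}, hence $V(\varphi)=0$) yields, after integration by parts, the exact identity
\[
8nt^2\int_M d\varphi_t(\vnab\varphi)\,u\vol-4nt^2\int_M|\vnab\varphi|^2\Delta_b u\vol=-8nt^2\int_M\sum_s\bigl[(\nabla^2\varphi)(e_a,\xi_s)\bigr]^2 u\vol.
\]
Only after choosing $\nu=2nt$ do the $\bigl[(\nabla^2\varphi)(e_a,\xi_s)\bigr]^2$ contributions cancel; what remains is a residual $2\bigl(1+\tfrac{3}{n}\bigr)\int_M u|\bnab\varphi|^2\vol$, not zero. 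This residual is precisely what is controlled by Corollary~\ref{cor1} in the form $\int_M u|\bnab\varphi|^2\vol\le\frac{2n\alpha^2}{t}$ (only this weak consequence is needed, not the full \eqref{sub1} with the $|\vnab f|^2$ term), and that is what produces the constant $\frac{2(2n-na+6)\alpha^2}{t}$.

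Two minor corrections: the evolution equation for $\varphi$ has the wrong sign on $|\bnab\varphi|^2$ (it should read $\partial_t\varphi=-\Delta_b\varphi-|\bnab\varphi|^2-\frac{2n\alpha^2 a}{t}$), and the Hessian lower bound is $|\nabla^2\varphi|^2\ge|(\nabla^2\varphi)_{[-1][sym]}|^2+4n|\vnab\varphi|^2+\frac{1}{4n}(\Delta_b\varphi)^2$; the coefficient $\frac{1}{2n}$ in the final statement arises only after multiplying by the overall factor $-2t$.
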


\begin{conv}\label{conv}
\label{conven} \hfill\break\vspace{-15pt}

\begin{enumerate}[ a)]

\item We shall use $X,Y,Z,U$ to denote horizontal vector fields, i.e. $%
X,Y,Z,U\in H$.

\item $\{e_1,\dots,e_{n},I_1e_1,\dots,I_1e_{n},I_2e_1,\dots,I_2e_{n},I_3e_1,\dots,I_3e_{n}\}$ denotes an adapted  local 
orthonormal basis of the horizontal space $H$.

\item The summation convention over repeated vectors/indices  from the basis
$\{e_1,\dots,e_{4n}\}$ will be used. For example, for a
(0,2)-tensor $P$ we have
\begin{multline*}
P(e_b,e_b)=\sum_{b=1}^{4n}P(e_b,e_b)=\sum_{b=1}^{n}P(e_b,e_b)+\sum_{b=1}^{n}P(I_1e_b,I_1e_b)
+\sum_{b=1}^{n}P(I_2e_b,I_2e_b)+\sum_{b=1}^{n}P(I_3e_b,I_3e_b).
\end{multline*}

\item The triple $(i,j,k)$ denotes any cyclic permutation of
$(1,2,3)$.

\item $s$ and $t$ will be any numbers from the set $\{1,2,3\}$,
$s, t\in\{1,2,3\} $.

\end{enumerate}
\end{conv}

\textbf{Acknowledgments}
The research of S.I. is partially supported by the European Union-Next Generation EU, through the National Recovery and Resilience Plan of the Republic of Bulgaria, project 
SUMMIT BG-RRP-2.004-0008-C01. The research of A.P. is partially financed  by Contract KP-06-H72-1/05.12.2023 with the National Science Fund of Bulgaria, Contract 80-10-181/22.4.2024    with the Sofia University "St. Kl. Ohridski" and  the National Science Fund of Bulgaria, National Scientific Program ``VIHREN", Project No. KP-06-DV-7. A.P. also acknowledges the University of Miami (UM), during his stay in which as a Fulbright Scholar under the auspices of the UM's agreement with Fulbright Bulgaria, the first version of the manuscript was prepared.

\section{Quaternionic contact manifolds}

Quaternionic contact manifolds,  introduced by O. Biquard in 
\cite{Biq1}, appear naturally as the
conformal boundary at infinity of the quaternionic hyperbolic
space and it is further developed in connection with finding  the extremals and the best constant in the $L^2$ Folland-Stein inequality on the quaternionic Heisenberg group and related qc Yamabe problem \cite{IMV,IMV2,IP,IMV3,IV}.

\subsection{Quaternionic contact structures and the Biquard connection} 
\label{ss:Biq conn}

Following Biquard, a quaternionic contact (qc) manifold $(M, g, \mathbb{Q})$ is a $4n+3$%
-dimensional manifold $M$ with a codimension three distribution $H$ equipped
with an $Sp(n)Sp(1)$--structure. Explicitly, $H$ is the kernel of a local
1-form $\eta=(\eta_1,\eta_2,\eta_3)$ with values in $\mathbb{R}^3$ together
with a compatible Riemannian metric $g$ and a rank-three bundle $\mathbb{Q}$
consisting of endomorphisms of $H$ locally generated by three almost complex
structures $I_1,I_2,I_3$ on $H$ satisfying the identities of the imaginary
unit quaternions. Thus, we have $I_1I_2=-I_2I_1=I_3, \quad
I_1I_2I_3=-id_{|_H},$  hermitian compatible with the metric, $%
g(I_s.,I_s.)=g(.,.),$ and the next  conditions
hold $2g(I_sX,Y)\ =\ d\eta_s(X,Y)$.

On a qc manifold of dimension $(4n+3)>7$ with a fixed metric $g$
on  $H$ there exists a canonical connection defined in
\cite{Biq1}. Biquard  showed that there is a unique connection $%
\nabla$ with torsion $T$ and a unique supplementary subspace $V$ to $H$ in $%
TM$, such that:
\begin{enumerate}[(i)]
\item $\nabla$ preserves the splitting $H\oplus V$ and the
$Sp(n)Sp(1)$--structure on $H$, 
\[\nabla g=0,\qquad \nabla I_i=-\alpha_jI_k+\alpha_kI_j
\]
and its torsion on $H$ is given by
$T(X,Y)=-[X,Y]_{|V}$;
\item for $\xi\in V$, the endomorphism $T(\xi,.)_{|H}$ of $H$ lies in $%
(sp(n)\oplus sp(1))^{\bot}\subset gl(4n)$; \item the connection on
$V$ is induced by the natural identification 
of $V$ with $\mathbb Q$, 
$\nabla\varphi=0$,
\[\nabla\xi_i=-\alpha_j\xi_k+\alpha_k\xi_j.\]
\end{enumerate}


When the dimension of $M$ is at least eleven \cite{Biq1} also
described the supplementary \emph{vertical distribution} $V$,
which is (locally) generated by the so called \emph{Reeb vector
fields} $\{\xi_1,\xi_2,\xi_3\}$ determined by
\begin{equation}  \label{bi1}
\begin{aligned}
\eta_s(\xi_k)=\delta_{sk}, \qquad (\xi_s\lrcorner d\eta_s)_{|H}=0,
\qquad (\xi_s\lrcorner d\eta_k)_{|H}=-(\xi_k\lrcorner
d\eta_s)_{|H},
\end{aligned}
\end{equation}
where $\lrcorner$ denotes the interior multiplication.

If the dimension of $M $ is seven, Duchemin shows in \cite{D} that if we
assume, in addition, the existence of Reeb vector fields as in \eqref{bi1},
then the Biquard result holds. \emph{Henceforth, by a qc structure in
dimension $7$ we shall mean a qc structure satisfying \eqref{bi1}}. This
implies the existence of the connection with properties (i), (ii) and (iii)
above.

The fundamental 2-forms $\omega_s$ of the quaternionic contact structure  are
defined by
\begin{equation*}
2\omega_{s|H}\ =\ \, d\eta_{s|H},\qquad \xi\lrcorner\omega_s=0,\quad \xi\in
V.
\end{equation*}
The torsion of the Biquard connection restricted to $H$ has the form 
\begin{equation*}
T(X,Y)=-[X,Y]_{|V}=2\sum_{s=1}^3\omega_s(X,Y)\xi_s.
\end{equation*}

\subsection{Invariant decompositions}

Any endomorphism $\Psi$ of $H$ can be decomposed with respect to the
quaternionic structure $(\mathbb{Q},g)$ uniquely into four $Sp(n)$--invariant
parts, 
$\Psi=\Psi^{+++}+\Psi^{+--}+\Psi^{-+-}+\Psi^{--+},$ 
where $\Psi^{+++}$ commutes with all three $I_i$, $\Psi^{+--}$ commutes with
$I_1$ and anti-commutes with the others two, etc. The two $Sp(n)Sp(1)$%
--invariant components \index{$Sp(n)Sp(1)$-invariant
components!$\Psi_{[3]}$} \index{$Sp(n)Sp(1)$-invariant
components!$\Psi_{[-1]}$} are given by $
\Psi_{[3]}=\Psi^{+++},\quad
\Psi_{[-1]}=\Psi^{+--}+\Psi^{-+-}+\Psi^{--+}. $ These are the projections on the eigenspaces of the Casimir
operator $
\Upsilon =\ I_1\otimes I_1\ +\ I_2\otimes I_2\ +\ I_3\otimes I_3,
$ corresponding, respectively, to the eigenvalues $3$ and $-1$, see \cite{CSal}%
. Note here that each of the three 2-forms $\omega_s$ belongs to the
[-1]-component, $\omega_s=\omega_{s[-1]}$, and constitute a basis of the Lie
algebra $sp(1)$.

If $n=1$ then the space of symmetric endomorphisms commuting with all $I_s$
is 1-dimensional, i.e., the [3]-component of any symmetric endomorphism $\Psi
$ on $H$ is proportional to the identity, $\Psi_{[3]}=-%
\frac{tr\Psi}{4}Id_{|H}$.

\subsection{The torsion tensor}

The torsion endomorphism $T_{\xi }=T(\xi ,\cdot ):H\rightarrow H,\quad \xi
\in V,$ will be decomposed into its symmetric part $T_{\xi }^{0}$ and
skew-symmetric part $b_{\xi }, T_{\xi }=T_{\xi }^{0}+b_{\xi }$. Biquard
showed in \cite{Biq1} that the torsion $T_{\xi }$ is completely trace-free, $%
tr\,T_{\xi }=tr\,T_{\xi }\circ I_{s}=0$, its symmetric part has the
properties $T_{\xi _{i}}^{0}I_{i}=-I_{i}T_{\xi _{i}}^{0},\quad I_{2}(T_{\xi
_{2}}^{0})^{+--}=I_{1}(T_{\xi _{1}}^{0})^{-+-},\quad I_{3}(T_{\xi
_{3}}^{0})^{-+-}=I_{2}(T_{\xi _{2}}^{0})^{--+},\quad I_{1}(T_{\xi
_{1}}^{0})^{--+}=I_{3}(T_{\xi _{3}}^{0})^{+--}$.
The skew-symmetric part can be represented as $b_{\xi _{i}}=I_{i}u$, where $u
$ is a traceless symmetric (1,1)-tensor on $H$ which commutes with $%
I_{1},I_{2},I_{3}$. Therefore we have $T_{\xi _{i}}=T_{\xi _{i}}^{0}+I_{i}u$%
. When $n=1$ the tensor $u$ vanishes identically, $u=0$, and the
torsion is a symmetric tensor, $T_{\xi }=T_{\xi }^{0}.$

The two $Sp(n)Sp(1)$--invariant trace-free symmetric 2-tensors on $H$
\begin{equation} \label{Tcompnts}
T^0(X,Y)= g((T_{\xi_1}^{0}I_1+T_{\xi_2}^{0}I_2+T_{ \xi_3}^{0}I_3)X,Y) \
\text{ and }\ U(X,Y) =g(uX,Y)
\end{equation}
were introduced in \cite{IMV} and enjoy the properties
\begin{equation}  \label{propt}
\begin{aligned} T^0(X,Y)+T^0(I_1X,I_1Y)+T^0(I_2X,I_2Y)+T^0(I_3X,I_3Y)=0, \\
U(X,Y)=U(I_1X,I_1Y)=U(I_2X,I_2Y)=U(I_3X,I_3Y),\\
T^0(e_a,e_a)=T^0(I_se_a,e_a)=U(e_a,e_a)=0. \end{aligned}
\end{equation}
\indent It is shown in \cite[Proposition~2.3]{IV} 
$
4T^0(\xi_s,I_sX,Y)=T^0(X,Y)-T^0(I_sX,I_sY)
$ 
which yields (see e.g. \cite[formula (2.5)]{IPV3})
\begin{equation}\label{need1}
T(\xi_s,I_sX,Y)
=\frac14\Big[T^0(X,Y)-T^0(I_sX,I_sY)\Big]-U(X,Y).
\end{equation}
In dimension seven $(n=1)$ the tensor $U$ vanishes identically,
$U=0$.
\subsection{Torsion and curvature}
Let $R=[\nabla,\nabla]-\nabla_{[\ ,\ ]}$ be the curvature tensor of $\nabla$
and the dimension is $4n+3$. We denote the curvature tensor of type (0,4)
and the torsion tensor of type (0,3) by the same letter, $%
R(A,B,C,D):=g(R(A,B)C,D),\quad T(A,B,C):=g(T(A,B),C)$, $A,B,C,D
\in \Gamma(TM)$. The  Ricci tensor, the normalized
scalar curvature and the  Ricci $2$-forms  of the Biquard connection, called \emph{qc-Ricci tensor} $Ric$,
\emph{normalized qc-scalar curvature} $S$ and \emph{qc-Ricci forms} $\rho_s, \tau_s$, respectively, are
defined by
\begin{equation}  \label{qscs}
\begin{aligned}
Ric(A,B)=R(e_b,A,B,e_b),\quad 8n(n+2)S=R(e_b,e_a,e_a,e_b),\\
\rho_s(A,B)=\frac1{4n}R(A,B,e_a,I_se_a), \quad \tau_s(A,B)=\frac1{4n}R(e_a,I_se_a,A,B).
\end{aligned}
\end{equation}
The $sp(1)$-part of $R$ is determined by the Ricci 2-forms and the
connection 1-forms by
\begin{equation}  \label{sp1curv}
R(A,B,\xi_i,\xi_j)=2\rho_k(A,B)=(d\alpha_k+\alpha_i\wedge\alpha_j)(A,B),
\qquad A,B \in \Gamma(TM).
\end{equation}
The horizontal Ricci tensor and the horizontal Ricci 2-forms can be expressed in terms of the
torsion of the Biquard connection \cite{IMV} (see also
\cite{IV,IV2}). We collect the necessary facts from
\cite[Theorem~3.12, Proposition~4.3 and Proposition~4.7]{IMV}, with slight modification presented in
\cite{IV},

\begin{thrm}\cite{IMV}\label{sixtyseven} On a $(4n+3)$-dimensional qc manifold $(M,\eta,\mathbb{Q})$ with a normalized qc scalar curvature $S$ we have the following relations
\begin{equation} \label{sixtyfour}
\begin{aligned}
& Ric(X,Y)  =(2n+2)T^0(X,Y)+(4n+10)U(X,Y)+2(n+2)Sg(X,Y),\\
& \rho_s(X,I_sY) = -\frac12\Bigl[T^0(X,Y)+T^0(I_sX,I_sY)\Bigr]-2U(X,Y)-Sg(X,Y),\\
& \tau_s(X,I_sY)  = -\frac{n+2}{2n}\Bigl[T^0(X,Y)+T^0(I_sX,I_sY)\Bigr]-Sg(X,Y),\\
& g(T(\xi_i,\xi_j),X) =-\rho_k(I_iX,\xi_i)=-\rho_k(I_jX,\xi_j)=-g([\xi_i,\xi_j],X),\\
&Ric(\xi_i,I_iX)=2\rho_k(I_jX,\xi_i)+2\rho_j(I_iX,\xi_k)+\sum_{a=1}^{4n}(\nabla_{e_a}T)(\xi_i,I_iX,e_a).
\end{aligned}
\end{equation}
For $n=1$ the above formulas hold with $U=0$.
\end{thrm}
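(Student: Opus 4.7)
The strategy is to exploit the rigidity of the Biquard connection: because $\nabla$ preserves both the metric and the $Sp(n)Sp(1)$-structure on $H$, its curvature tensor takes values in $sp(n)\oplus sp(1)$ on each skew pair, and any Ricci-type contraction must therefore be a linear combination of the $Sp(n)Sp(1)$-invariant symmetric tensors on $H$. By the invariant decompositions recalled above together with the algebraic identities \eqref{propt}, the only such trace-free symmetric 2-tensors are $T^0$ and $U$, so one \emph{expects} all of the listed Ricci-type quantities to be of the form $aT^0+bU+cSg$. The task is then to pin down the coefficients via Bianchi identities, traces, and the definition \eqref{qscs} of $S$.

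Concretely, I would apply the first Bianchi identity for a connection with torsion,
\[\sum_{\text{cyc}(X,Y,Z)} R(X,Y)Z \;=\; \sum_{\text{cyc}} \Bigl[T(T(X,Y),Z) + (\nabla_X T)(Y,Z)\Bigr],\]
to a triple of horizontal vectors. Because $T(X,Y)=2\sum_s\omega_s(X,Y)\xi_s$ is vertical and $T_{\xi_s}=T^0_{\xi_s}+I_s u$, the right-hand side is an explicit polynomial in $T^0$, $U$ and the fundamental 2-forms. Taking the trace of the resulting identity over the adapted basis $\{e_a\}$ and using the trace identities for $T^0$ and $U$ in \eqref{propt} collapses the sum to the stated formula for $\mathrm{Ric}(X,Y)$, with the constant $2(n+2)S$ fixed by the normalization \eqref{qscs}. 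The forms $\rho_s$ and $\tau_s$ follow by contracting the same curvature identity with $I_s$ in the appropriate slots, using the anti-commutation $T^0_{\xi_s}I_s=-I_sT^0_{\xi_s}$ to split the symmetric endomorphism into the combinations $T^0(X,Y)\pm T^0(I_sX,I_sY)$ appearing in the statement.

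The mixed vertical-horizontal formulas come from the second Bianchi identity evaluated with one vertical and two horizontal slots, together with the identification in \eqref{sp1curv} of $R(\cdot,\cdot,\xi_i,\xi_j)$ with the $sp(1)$-curvature $2\rho_k$. This produces $g(T(\xi_i,\xi_j),X)=-\rho_k(I_iX,\xi_i)$, and the final relation for $\mathrm{Ric}(\xi_i,I_iX)$ appears precisely as the horizontal divergence $\sum_a(\nabla_{e_a}T)(\xi_i,I_iX,e_a)$, which the Bianchi identity cannot absorb purely algebraically.

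The main obstacle is the $Sp(n)Sp(1)$-bookkeeping. The $[-1]$-component of a symmetric endomorphism on $H$ further splits into three pieces $(+--)$, $(-+-)$, $(--+)$, and one must invoke the cross-symmetries among the $T^0_{\xi_s}$ (for instance $I_2(T^0_{\xi_2})^{+--}=I_1(T^0_{\xi_1})^{-+-}$) to consolidate the various $T^0_{\xi_s}$ contributions into the single tensor $T^0$ with the correct integer coefficients $(2n+2)$, $(4n+10)$, $2(n+2)$, etc. In dimension seven the $[3]$-part of any symmetric endomorphism is forced to be a multiple of the identity, and the trace-freeness plus full commutativity of $u$ then yield $u\equiv 0$, whence $U=0$; this accounts for the special-dimension clause at the end of the statement.
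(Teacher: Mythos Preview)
The paper does not prove this theorem at all: it is quoted verbatim from \cite{IMV} (with minor reformulations from \cite{IV}), as the sentence immediately preceding the statement makes explicit. So there is no ``paper's own proof'' to compare your proposal against; the result is imported as background.

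That said, your outline is broadly the right shape for how the original source \cite{IMV} establishes these identities: one applies the first Bianchi identity for a connection with torsion to horizontal triples, contracts over the adapted basis, and uses the $Sp(n)Sp(1)$ decomposition of symmetric endomorphisms together with the cross-relations among the $T^0_{\xi_s}$ to reduce everything to $T^0$, $U$, and $Sg$. One small slip: the fourth and fifth lines of \eqref{sixtyfour} (the mixed vertical--horizontal identities) come from the \emph{first} Bianchi identity with one or two vertical arguments, not the second; the divergence term $\sum_a(\nabla_{e_a}T)(\xi_i,I_iX,e_a)$ appears precisely because the first Bianchi identity contains $(\nabla_X T)(Y,Z)$ in its cyclic sum, and tracing over the horizontal pair leaves this residual term. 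The second Bianchi identity is used elsewhere in the theory (e.g.\ to show constancy of $S$ in the qc-Einstein case) but not here.
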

\begin{dfn}
A qc structure is said to be qc-Einstein if the horizontal qc-Ricci tensor is a scalar multiple of the metric, $Ric(X,Y)=fg(X,Y)$.
\end{dfn} 
In view of Theorem~\ref{sixtyseven} the qc-Einstein condition takes the form $Ric(X,Y)=2(n+2)Sg(X,Y)$ and is equivalent to the vanishing of the torsion endomorphism of the
Biquard connection \cite{IMV}.  As a consequence of the second Bianchi identity for the Biquard connection one has that the qc scalar curvature $S$ of a qc-Einstein manifold is a constant and  the vertical distribution is integrable \cite{IMV,IMV1}. We have

\begin{thrm}\cite{IMV,IMV1}\label{3sas}
Any 3-Sasakian manifold has zero torsion endomorphism with integrable vertical distribution and it is a qc-Einstein space of positive constant qc scalar curvature. Conversely, any qc-Einstein manifold with positive qc scalar curvature is locally 3-Sasakian space. 
\end{thrm}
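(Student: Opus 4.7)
The plan is to prove the two implications separately, in both cases reducing everything to the analysis of the Biquard connection via Theorem~\ref{sixtyseven}.

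\smallskip

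\emph{3-Sasakian $\Longrightarrow$ qc-Einstein with $S>0$.} Let $(M,g,\xi_1,\xi_2,\xi_3)$ be 3-Sasakian. Set $\eta_s:=g(\xi_s,\cdot)$, $H:=\bigcap_s\ker\eta_s$, and $I_s:=\phi_s|_H$, where $\phi_s:=-\nabla^g\xi_s$. First I would verify that $(g,\mathbb{Q}=\mathrm{span}\{I_1,I_2,I_3\})$ together with $\eta$ constitutes a qc structure satisfying \eqref{bi1}; this is immediate from the Sasakian identities $d\eta_s(X,Y)=2g(\phi_sX,Y)$ and $\phi_i\phi_j=\phi_k$ on $H$, together with $[\xi_i,\xi_j]=2\xi_k$. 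By the uniqueness clause of Biquard's theorem the associated Reeb fields are precisely the Sasakian ones. Next, I would write the Biquard connection as $\nabla=\nabla^g+A$ for an unknown $(1,2)$-tensor $A$ and use properties (i)--(iii) of \S\ref{ss:Biq conn} to pin $A$ down; the Sasakian equation $(\nabla^g_X\phi_s)Y=g(X,Y)\xi_s-\eta_s(Y)X$ then yields that the resulting torsion endomorphism $T_{\xi_s}:H\to H$ vanishes identically, so $T^0=0$ and $U=0$. Theorem~\ref{sixtyseven} now gives qc-Einstein, and Kashiwada's theorem \cite{Kas}, combined with the comparison between the Levi-Civita and Biquard horizontal Ricci tensors, forces $S$ to be a positive constant.

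\smallskip

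\emph{qc-Einstein with $S>0$ $\Longrightarrow$ locally 3-Sasakian.} Assume $Ric(X,Y)=2(n+2)Sg(X,Y)$. Decomposing the first identity of \eqref{sixtyfour} along the $Sp(n)Sp(1)$-invariant pieces $T^0$ and $U$ (using their trace and symmetry properties \eqref{propt}) forces $T^0=0$ and $U=0$, i.e.\ the torsion endomorphism of $\nabla$ vanishes. The second Bianchi identity for $\nabla$, contracted as in the usual Einstein-implies-constant-scalar-curvature argument, then yields $dS=0$. With $T_\xi=0$, the fourth line of \eqref{sixtyfour} combined with \eqref{sp1curv} determines $[\xi_i,\xi_j]$ as a constant multiple of $\xi_k$; after the natural normalization of $\eta$ (absorbing $S$) one recovers $[\xi_i,\xi_j]=2\xi_k$. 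Finally, extend $I_s$ to a global $(1,1)$-tensor $\phi_s$ on $TM$ by $\phi_s\xi_s=0$, $\phi_s\xi_i=\xi_j$, $\phi_s\xi_j=-\xi_i$, and reverse the contorsion relation to express $\nabla^g\phi_s$ in terms of $\nabla\phi_s$ and $A$; the vanishing of $T^0,U$ then collapses the result to the Sasakian equation $(\nabla^g_X\phi_s)Y=g(X,Y)\xi_s-\eta_s(Y)X$.

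\smallskip

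\emph{Main obstacle.} The delicate step is the converse, specifically the transfer from the Biquard data back to the Levi-Civita structure. One must handle simultaneously the horizontal information (encoded in $T^0$, $U$, $S$) and the vertical/$sp(1)$-connection 1-forms $\alpha_s$ appearing in $\nabla I_i=-\alpha_jI_k+\alpha_kI_j$ and in \eqref{sp1curv}, and show that after the appropriate normalization they align with the prescription of a 3-Sasakian structure. The positivity $S>0$ is essential both to fix the sign in the Reeb bracket relations and to guarantee that the extended $\phi_s$ define genuine (nondegenerate) Sasakian structures rather than degenerate analogues.
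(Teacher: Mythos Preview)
The paper does not actually prove Theorem~\ref{3sas}: it is stated as a cited result from \cite{IMV,IMV1} with no argument given. So there is no ``paper's own proof'' against which to compare your proposal.

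That said, your outline follows the standard strategy of the cited references and is broadly on target. A couple of points are worth flagging. In the forward direction, you should be more explicit that identifying the Biquard connection via $\nabla=\nabla^g+A$ requires checking that your candidate $A$ produces a torsion satisfying the characterizing condition (ii) ($T_\xi\in(sp(n)\oplus sp(1))^\perp$), not only the compatibility and horizontal torsion conditions. In the converse direction, your sketch of recovering the Sasakian equation $(\nabla^g_X\phi_s)Y=g(X,Y)\xi_s-\eta_s(Y)X$ from the Biquard data is the genuinely nontrivial part (as you yourself note), and it requires more than just ``reversing the contorsion relation'': in \cite{IMV,IMV1} this passes through the identification of the qc-Einstein structure with a constant-curvature $sp(1)$-connection and an explicit description of the associated Riemannian metric on the total space, with positivity of $S$ used to normalize the contact form so that the Riemannian cone becomes hyper-K\"ahler. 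Your proposal names the right ingredients but does not yet assemble them into a computation that closes.
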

\subsection{The Ricci identities}
We  use repeatedly the Ricci identities for the Biquard connection of order two and three,
see also \cite{IV}. Let $f$ be a smooth function on the qc
manifold $M$ with horizontal gradient $\bnab f$ defined by
$g(\bnab f,X)=df(X)$. The sub-Laplacian of $f$ is $ \triangle_b
f=-\nabla^2f(e_a,e_a)$. We have the following Ricci
identities (see e.g. \cite{IMV,IV2})
\begin{equation}\label{Ricci identities}
\begin{aligned} & \nabla^2f
(X,Y)-\nabla^2f(Y,X)=-2\sum_{s=1}^3\omega_s(X,Y)df(\xi_s), \\ &
\nabla^2f (X,\xi_s)-\nabla^2f(\xi_s,X)=T(\xi_s,X,\bnab f),\\ &
\nabla^3 f (X,Y,Z)-\nabla^3 f(Y,X,Z)=-R(X,Y,Z,\bnab f) -
2\sum_{s=1}^3 \omega_s(X,Y)\nabla^2f (\xi_s,Z),\\&
\nabla^3f(\xi_s,X,Y)-\nabla^3f(X,Y,\xi_s)=-\nabla^2f(T(\xi_s,X),Y)-\nabla^2f(X,T(\xi_s,Y))\\&\hspace{5cm}-df((\nabla_XT)(\xi_s,Y))-R(\xi_s,X,Y,\bnab f).
\end{aligned}
\end{equation}
In view of \eqref{Ricci identities} we have the decompositions (see also \cite{IPV1,IP})
\begin{equation}  \label{boh2}
\begin{aligned}  &(\bi^2f)_{[3][0]}(X,Y)=(\bi^2f)_{[3]}(X,Y) +\frac1{4n}\triangle_b fg(X,Y),\\&|(\nabla^2f)_{[3][0]}|^2=|(\nabla^2f)_{[3]}|^2-\frac{1}{4n}(\Delta_b f)^2,\\&(\bi^2f)_{[-1]}(X,Y)= (\nabla^2f)_{[-1][sym]}
(X,Y)+(\nabla^2f)_{[-1][a]}(X,Y)\\&\qquad\qquad\qquad\quad=(\nabla^2f)_{[-1][sym]}(X,Y)-\sum_{s=1}^3\omega_s(X,Y)df(\xi_s),\\ &|(\bi^2f)_{[-1]}|^2=|(\nabla^2f)_{[-1][sym]}|^2+4n|\vnab f|^2.
\end{aligned}
\end{equation}
In particular, we have
\begin{equation}  \label{xi1}
g(\nabla^2f , \omega_s) =\nabla^2f(e_a,I_se_a)=-4ndf(\xi_s).
\end{equation}
We also need  the following  qc-Bochner formula from \cite[(4.1)]{IPV1}
\begin{multline}  \label{bohS}
-\frac12\triangle_b |\bnab f|^2=|\nabla^2f|^2-g\left (\bnab
(\triangle_b f), \bnab f \right )+2(n+2)S|\bnab
f|^2+2(n+2)T^0(\bnab f,\bnab f) \\ +2(2n+2)U(\bnab f,\bnab f)+
4\sum_{s=1}^3\nabla^2f(\xi_s,I_s\bnab f), 
\end{multline}
and the  integral formulas that hold on a compact $(4n+3)$--dimensional qc manifold \cite[(3.10) and (3.12)]{IPV1}:
\begin{equation}\label{horverhess}
\begin{aligned}
&\int_M\sum_{s=1}^3\bi^2f(\xi_s,I_s\bnab f)\vol=\int_M\Big[\frac3{4n}|(\bi^2f)_{[3]}|^2-\frac1{4n}|(\bi^2f)_{[-1]}|^2-\frac12\sum_{s=1}^3\tau_{s}(I_s\bnab f,\bnab f)\Big]\vol,\\
&\int_M\sum_{s=1}^3\bi^2f(\xi_s,I_s\bnab f)\vol=-\int_M\Big[4n|\vnab f|^2
+\sum_{s=1}^3T(\xi_s,I_s\bnab f,\bnab f)\Big] \vol.
\end{aligned}
\end{equation}
\subsection{The horizontal divergence theorem}
Let $(M, g,\mathbb{Q})$ be a qc manifold of dimension $4n+3\geq 7$. For a
fixed local 1-form $\eta$ and a fixed $s\in \{1,2,3\}$ the form
$Vol_{\eta}=\eta_1\wedge\eta_2\wedge\eta_3\wedge\omega_s^{2n}$
is a locally defined volume form. Note that $Vol_{\eta}$ is independent of $%
s $ as well as the local one forms $\eta_1,\eta_2,\eta_3 $. Hence, it is a
globally defined volume form. The (horizontal) divergence of a horizontal
vector field/one-form $\sigma\in\Lambda^1\, (H)$, defined by $\nabla^*
\sigma =-tr|_{H}\nabla\sigma= -\nabla \sigma(e_a,e_a) $ 
supplies the integration by parts formula, \cite{IMV}, see also \cite{Wei},
\begin{equation*}  \label{div}
\int_M (\nabla^*\sigma)\,\, Vol_{\eta}\ =\ 0.
\end{equation*}

\section{Proof of the Theorems}
\subsection{Preliminary results}
In this subsection we prove a number of preliminary results, which serve as a base for establishing our main results. We begin with the following
\begin{lemma}\label{BochnerIn}
Let $(M,g,\mathbb{Q})$ be a qc manifold of dimension $4n+3$. Then for any $f\in\mathcal{F}(M)$ and a constant $\nu>0$ the following inequality holds:
\begin{multline}\label{BochnerIneq} 
-\frac{1}{2}\Delta_b|\bnab f|^2\geq|(\nabla^2 f)_{[-1][sym]}|^2+4n|\vnab f|^2+|(\nabla^2f)_{[3][0]}|^2+\frac{1}{4n}(\Delta_b f)^2-g(\bnab\Delta_b f,\bnab f)\\+2(n+2)S|\bnab f|^2
+2nT^0(\bnab f,\bnab f)+4(n+4)U(\bnab f,\bnab f)-\frac{6}{\nu}|\bnab f|^2-2\nu\sum_{s=1}^3[\nabla^2 f(e_a,\xi_s)]^2.
\end{multline}
\end{lemma}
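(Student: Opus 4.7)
The plan is to derive \eqref{BochnerIneq} from the qc-Bochner formula \eqref{bohS} in three moves: decompose $|\nabla^2 f|^2$ via the $Sp(n)Sp(1)$-invariant decomposition, swap the arguments of the mixed Hessian term using a Ricci identity to rebalance the torsion contribution, and finally absorb the remaining mixed Hessian term by a Young inequality with weight $\nu$.

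First, the third and fourth lines of \eqref{boh2} give
\[
|\nabla^2 f|^2=|(\nabla^2 f)_{[3][0]}|^2+\frac{1}{4n}(\Delta_b f)^2+|(\nabla^2 f)_{[-1][sym]}|^2+4n|\vnab f|^2,
\]
which upon substitution into \eqref{bohS} already produces the first four terms on the right-hand side of \eqref{BochnerIneq}.

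Second, by the Ricci identity in \eqref{Ricci identities},
\[
\nabla^2 f(\xi_s,I_s\bnab f)=\nabla^2 f(I_s\bnab f,\xi_s)-T(\xi_s,I_s\bnab f,\bnab f),
\]
and summing \eqref{need1} over $s$ together with the identity $\sum_s T^0(I_sX,I_sY)=-T^0(X,Y)$ (a consequence of \eqref{propt}) gives
\[
\sum_{s=1}^3 T(\xi_s,I_s\bnab f,\bnab f)=T^0(\bnab f,\bnab f)-3U(\bnab f,\bnab f).
\]
Combining these with the torsion terms already present in \eqref{bohS} exactly converts the coefficients $2(n+2)$ of $T^0$ and $2(2n+2)$ of $U$ into the advertised $2n$ and $4(n+4)$, leaving the term $4\sum_s\nabla^2 f(I_s\bnab f,\xi_s)$ to be controlled.

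Third, writing $I_s\bnab f=df(e_a)\,I_se_a$ and applying Young's inequality $4AB\geq-2\nu A^2-\tfrac{2}{\nu}B^2$ with $A=\nabla^2 f(I_se_a,\xi_s)$, $B=df(e_a)$, then summing over $a$ and $s$ and using that $\{I_se_a\}_{a=1}^{4n}$ is again an orthonormal basis of $H$, yields
\[
4\sum_{s=1}^3\nabla^2 f(I_s\bnab f,\xi_s)\geq-2\nu\sum_{s=1}^3[\nabla^2 f(e_a,\xi_s)]^2-\frac{6}{\nu}|\bnab f|^2,
\]
which supplies the last two error terms in \eqref{BochnerIneq}. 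The only delicate step is the sign bookkeeping in the second move: the Ricci-identity swap contributes $-4T^0+12U$, which is precisely what is needed to drop $2(n+2)T^0$ to $2nT^0$ and lift $2(2n+2)U$ to $4(n+4)U$, and this works only thanks to the specific trace relation in \eqref{propt}.
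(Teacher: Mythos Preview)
Your proof is correct and follows essentially the same three-step approach as the paper: decompose $|\nabla^2 f|^2$ via \eqref{boh2}, use the second Ricci identity together with \eqref{need1} and \eqref{propt} to swap the mixed Hessian and rebalance the $T^0$ and $U$ coefficients, and then absorb the remaining mixed term by a weighted elementary inequality. The only cosmetic difference is that the paper first rewrites $\nabla^2 f(I_s\bnab f,\xi_s)$ as $g(I_s\bnab f,\bnab(\xi_s f))$ and applies Cauchy--Schwarz, whereas you apply Young's inequality directly to $\nabla^2 f(I_se_a,\xi_s)\cdot df(e_a)$; your version has the minor advantage of producing exactly the term $\sum_s[\nabla^2 f(e_a,\xi_s)]^2$ that appears in the statement without any further identification.
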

\begin{proof} We have with the help of the second identity in  \eqref{Ricci identities}, \eqref{need1} and \eqref{propt} consecutively
\begin{multline}\label{HorVerRep}
\sum_{s=1}^3\nabla^2f(\xi_s,I_s\bnab f)=\sum_{s=1}^3\big[\nabla^2f(I_s\bnab f,\xi_s)-T(\xi_s,I_s\bnab f,\bnab f)\big]\\=\sum_{s=1}^3g\big(I_s\bnab f, \bnab(\xi_sf)\big)-T^0(\bnab f,\bnab f)+3U(\bnab f,\bnab f).
\end{multline}
For $\nu=Const>0$, the Cauchy--Schwarz inequality yelds
\begin{multline}\label{CSIneq}
\sum_{s=1}^3g\big(I_s\bnab f,\bnab(\xi_sf)\big)\geq-\sum_{s=1}^3|\bnab f||\bnab(\xi_sf)|\geq-\frac{1}{2}\sum_{s=1}^3\Big[\frac{1}{\nu}|\bnab f|^2+\nu|\bnab(\xi_sf)|^2\Big]\\=-\frac{3}{2\nu}|\bnab f|^2-\frac{\nu}{2}\sum_{s=1}^3|\bnab(\xi_sf)|^2.
\end{multline}
Insert \eqref{HorVerRep} and \eqref{CSIneq} into \eqref{bohS}, taking into account the decompositions $|\nabla^2f|^2=|(\nabla^2f)_{[-1]}|^2+|(\nabla^2f)_{[3]}|^2$ and \eqref{boh2}, to get \eqref{BochnerIn}.
\end{proof}

\begin{lemma}\label{Auxiliary3}
Let $(M,g,\mathbb{Q})$ be a qc manifold and $f\in\mathcal{F}(M)$. Then the following equality holds:
\begin{equation}\label{aux3}
(\nabla^3f)(e_a,e_a,\vnab f)=-\vnab f(\Delta_b f)+2g(T_{\vnab f},\nabla^2 f)-(\nabla_{e_a}T)(e_a,\vnab f,\bnab f)+Ric(\vnab f,\bnab f).
\end{equation}
\end{lemma}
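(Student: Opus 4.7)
The plan is to commute the vertical argument $\vnab f$ from the third slot of $(\nabla^3 f)(e_a,e_a,\vnab f)$ to the first slot using the fourth identity in \eqref{Ricci identities}, after which the key piece $(\nabla^3 f)(\vnab f,e_a,e_a)$ reduces to a simple trace expression. By tensoriality of $\nabla^3 f$ in its third argument together with the decomposition $\vnab f=\sum_{s=1}^3 df(\xi_s)\xi_s$, one first writes
\[(\nabla^3 f)(e_a,e_a,\vnab f)=\sum_{s=1}^3 df(\xi_s)(\nabla^3 f)(e_a,e_a,\xi_s).\]
Applying the fourth Ricci identity with $X=Y=e_a$, together with the identification $R(\xi_s,e_a,e_a,\bnab f)=Ric(\xi_s,\bnab f)$ (which follows from the skew-symmetries of $R$ in its first two and last two arguments and the definition of $Ric$), then multiplying by $df(\xi_s)$ and summing over $s$ produces
\begin{multline*}
(\nabla^3 f)(e_a,e_a,\vnab f)=(\nabla^3 f)(\vnab f,e_a,e_a)+\nabla^2 f(T_{\vnab f}e_a,e_a)+\nabla^2 f(e_a,T_{\vnab f}e_a)\\+(\nabla_{e_a}T)(\vnab f,e_a,\bnab f)+Ric(\vnab f,\bnab f).
\end{multline*}
Antisymmetry of $T$ in its first two arguments turns the torsion-derivative term into $-(\nabla_{e_a}T)(e_a,\vnab f,\bnab f)$, matching the shape of the stated formula.

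Two pieces remain to handle. Expanding $(\nabla^3 f)(\vnab f,e_a,e_a)=(\nabla_{\vnab f}\nabla^2 f)(e_a,e_a)$ gives $\vnab f(\nabla^2 f(e_a,e_a))-\nabla^2 f(\nabla_{\vnab f}e_a,e_a)-\nabla^2 f(e_a,\nabla_{\vnab f}e_a)=-\vnab f(\Delta_b f)$, since $g(\nabla_{\vnab f}e_a,e_b)$ is antisymmetric in $(a,b)$ and therefore pairs trivially with the symmetric part of $\nabla^2 f$, while the antisymmetric part of $\nabla^2 f$ has zero diagonal. To collapse $\nabla^2 f(T_{\vnab f}e_a,e_a)+\nabla^2 f(e_a,T_{\vnab f}e_a)$ into $2g(T_{\vnab f},\nabla^2 f)$, the plan is to invoke the first Ricci identity in \eqref{Ricci identities} to rewrite the difference of the two terms as $-2\sum_{s=1}^3 df(\xi_s)\sum_a \omega_s(T_{\vnab f}e_a,e_a)$; the task then reduces to showing that this difference vanishes after the trace in $a$.

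The hard part, and what I expect to be the main obstacle, is precisely the vanishing of $\sum_a \omega_s(T_{\vnab f}e_a,e_a)=\mathrm{tr}(I_s T_{\vnab f})$, because this is what produces the factor $2$ in the claim. The proof will rest on the structural decomposition $T_{\xi_t}=T^0_{\xi_t}+I_t u$ recalled in the paper, combined with the elementary observation that whenever $\Psi$ is a $g$-symmetric endomorphism of $H$, the operator $I_s\Psi$ is $g$-antisymmetric and hence traceless. This immediately eliminates the $T^0_{\xi_t}$ contribution; the remaining piece $\mathrm{tr}(I_sI_tu)$ equals $-\mathrm{tr}(u)=0$ when $s=t$ (since $u$ is traceless) and reduces to $\pm\mathrm{tr}(I_ku)=0$ for cyclic $(s,t,k)$ (again by the symmetric-$\Psi$ argument, using that $u$ is symmetric and commutes with every $I_r$). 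Once this trace identity is in hand, the two symmetric contractions coincide, the factor $2$ is delivered, and assembling the pieces completes the proof.
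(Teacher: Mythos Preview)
Your argument is correct and follows essentially the same route as the paper: both apply the fourth Ricci identity with $X=Y=e_a$, multiply by $df(\xi_s)$, sum over $s$, and then use the first Ricci identity together with trace properties of the torsion to obtain the factor~$2$. The only cosmetic difference is that you derive $\mathrm{tr}(I_sT_{\xi_t})=0$ from the decomposition $T_{\xi_t}=T^0_{\xi_t}+I_tu$, whereas the paper appeals to the trace-free facts recorded in \eqref{propt} (equivalently, to Biquard's statement that $\mathrm{tr}\,T_\xi=\mathrm{tr}\,T_\xi\circ I_s=0$), which would have shortened your last paragraph.
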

\begin{proof}
We substitute $X=e_a, Y=e_a$ into the last  identity in \eqref{Ricci identities}, multiply the both sides by $df(\xi_s)$, take the sum over $s$ of the obtained identity  using the first identity in \eqref{Ricci identities} and the properties of the torsion listed in \eqref{propt} and \eqref{need1} to get  \eqref{aux3} after some standard calculations.
\end{proof}
\begin{lemma}\label{Auxiliary1}
Let $(M,g,\mathbb{Q})$ be a qc manifold  and $u(x,t)$ be a positive solution of \eqref{qcheat}. Then $f(x,t)=\ln u(x,t)$ satisfies 
\begin{equation}\label{aux1}
(\nabla^3f)(e_a,e_a,\vnab f)-\vnab f\big(\partt f\big)=-2(\nabla^2f)(\bnab f,\vnab f)+V(f),
\end{equation}
where the operator $V:\mathcal{F}(M)\rightarrow\mathcal{F}(M)$  is defined by
\begin{equation}\label{Voperator}
V(\varphi)\eqdef2T(\vnab\varphi,\bnab\varphi,\bnab\varphi)+2g(T_{\vnab\varphi},\nabla^2\varphi)-(\nabla_{e_a}T)(e_a,\vnab\varphi,\bnab\varphi)+Ric(\vnab\varphi,\bnab\varphi).
\end{equation}
\end{lemma}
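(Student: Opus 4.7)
The plan is to convert the qc heat equation into a PDE for $f=\ln u$, differentiate that identity in the vertical direction, and then massage the result using Lemma~\ref{Auxiliary3} together with the second Ricci identity in \eqref{Ricci identities} to produce exactly the operator $V(f)$.

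First I would translate $\partial_t u=-\Delta_b u$ into an equation for $f$. Since $\bnab u=e^f\bnab f$ and a one-line calculation shows $\nabla^2 u(X,Y)=e^f\bigl[df(X)df(Y)+\nabla^2 f(X,Y)\bigr]$, tracing over the horizontal frame gives $\Delta_b u=e^f(\Delta_b f-|\bnab f|^2)$, hence
\begin{equation*}
\partt f=-\Delta_b f+|\bnab f|^2.
\end{equation*}
Applying $\vnab f$ to both sides yields
\begin{equation*}
\vnab f\bigl(\partt f\bigr)=-\vnab f(\Delta_b f)+\vnab f(|\bnab f|^2).
\end{equation*}

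Next I would compute $\vnab f(|\bnab f|^2)$. Writing $\vnab f=\sum_s df(\xi_s)\xi_s$ and using $\nabla g=0$ together with the fact that $\nabla$ preserves the horizontal/vertical splitting, one has $\xi_s|\bnab f|^2=2g(\nabla_{\xi_s}\bnab f,\bnab f)=2\nabla^2 f(\xi_s,\bnab f)$, and consequently
\begin{equation*}
\vnab f(|\bnab f|^2)=2\,\nabla^2 f(\vnab f,\bnab f).
\end{equation*}
Now I would subtract the previous displayed equation from the identity \eqref{aux3} of Lemma~\ref{Auxiliary3}. The two occurrences of $\vnab f(\Delta_b f)$ cancel, leaving
\begin{equation*}
(\nabla^3 f)(e_a,e_a,\vnab f)-\vnab f\bigl(\partt f\bigr)=2g(T_{\vnab f},\nabla^2 f)-(\nabla_{e_a}T)(e_a,\vnab f,\bnab f)+Ric(\vnab f,\bnab f)-2\nabla^2 f(\vnab f,\bnab f).
\end{equation*}

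Finally, I would invoke the second Ricci identity in \eqref{Ricci identities} with $X=\bnab f$ to swap the order of the arguments: $\nabla^2 f(\bnab f,\xi_s)-\nabla^2 f(\xi_s,\bnab f)=T(\xi_s,\bnab f,\bnab f)$. Multiplying by $df(\xi_s)$ and summing over $s$ gives
\begin{equation*}
\nabla^2 f(\bnab f,\vnab f)-\nabla^2 f(\vnab f,\bnab f)=T(\vnab f,\bnab f,\bnab f),
\end{equation*}
so that $-2\nabla^2 f(\vnab f,\bnab f)=-2\nabla^2 f(\bnab f,\vnab f)+2T(\vnab f,\bnab f,\bnab f)$. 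Substituting back regroups the right-hand side exactly as $-2(\nabla^2 f)(\bnab f,\vnab f)+V(f)$, which is \eqref{aux1}.

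The only subtle point — and the place where one must be careful rather than meet a real obstacle — is the ordering of arguments in $\nabla^2 f$ when computing $\vnab f(|\bnab f|^2)$: the natural expansion produces $\nabla^2 f(\vnab f,\bnab f)$, while the target $V(f)$ is phrased with $\nabla^2 f(\bnab f,\vnab f)$, and it is precisely the torsion discrepancy between these two that produces the distinctive leading term $2T(\vnab f,\bnab f,\bnab f)$ in the definition of $V$. Once that Ricci-identity exchange is performed, no further qc-geometric input is needed.
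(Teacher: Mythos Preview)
Your proof is correct and follows essentially the same route as the paper's: both use Lemma~\ref{Auxiliary3}, the heat-equation identity $\partt f+\Delta_b f=|\bnab f|^2$, and the second Ricci identity in \eqref{Ricci identities} to handle $\vnab f(|\bnab f|^2)$. The only cosmetic difference is that the paper applies the Ricci identity directly while expanding $\vnab f(|\bnab f|^2)$ (writing it as $2df(\xi_s)df(e_a)[\nabla^2f(e_a,\xi_s)-T(\xi_s,e_a,\bnab f)]$), whereas you first obtain $2\nabla^2 f(\vnab f,\bnab f)$ and then swap arguments afterward.
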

\begin{proof}
It easy to see that $f(x,t)$ determined in the condition of the present lemma satisfies the equality
\begin{equation}\label{nablagradf}
\big(\Delta_b+\partt\big)f=|\bnab f|^2.
\end{equation}
Furthermore, we have the following chain of equalities:
\begin{multline*}
(\nabla^3f)(e_a,e_a,\vnab f)-\vnab f\big(\partt f\big)\\=-\vnab f\big(\partt f\big)-\vnab f(\Delta_b f)+2g(T_{\vnab f},\nabla^2 f)-(\nabla_{e_a}T)(e_a,\vnab f,\bnab f)+Ric(\vnab f,\bnab f)\\=-\vnab f(|\bnab f|^2)+2g(T_{\vnab f},\nabla^2 f)-(\nabla_{e_a}T)(e_a,\vnab f,\bnab f)+Ric(\vnab f,\bnab f)\\=-2df(\xi_s)df(e_a)[\nabla^2f(e_a,\xi_s)-T(\xi_s,e_a,\bnab f)]+2g(T_{\vnab f},\nabla^2 f)-(\nabla_{e_a}T)(e_a,\vnab f,\bnab f)+Ric(\vnab f,\bnab f)\\=-2(\nabla^2f)(\bnab f,\vnab f)+2T(\vnab f,\bnab f,\bnab f)+2g(T_{\vnab f},\nabla^2 f)-(\nabla_{e_a}T)(e_a,\vnab f,\bnab f)+Ric(\vnab f,\bnab f)\\=-2(\nabla^2f)(\bnab f,\vnab f)+V(f),
\end{multline*}
where we use \eqref{aux3} in order to get the first equality and apply  \eqref{nablagradf} to achieve the second one. The third equality is a result of the application of the second identity from \eqref{Ricci identities}.
\end{proof}

We shall use repeatedly the following 
\begin{lemma}\label{Auxiliary2}
Let $(M,g,\mathbb{Q})$ be a qc manifold, $u(x,t):M\times[0,\infty)\rightarrow\mathbb{R}$ be a positive smooth function and $f(x,t)=\ln u(x,t).$ Suppose that
\begin{equation}\label{sublapxicom}
(\nabla^3u)(e_a,e_a,\vnab u)=-\vnab u(\Delta_b u).
\end{equation}
Then $V(f)=0.$
\end{lemma}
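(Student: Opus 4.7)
The plan is to apply Lemma~\ref{Auxiliary3} directly to the function $u$ (rather than to $f$) and then translate the resulting identity back to $V(f)$ via the chain rule for $f=\ln u$. Substituting $u$ in place of $f$ in \eqref{aux3} and using the hypothesis \eqref{sublapxicom} to cancel the left-hand side gives the ``torsion--Ricci'' identity
\[
2g(T_{\vnab u},\nabla^2 u) - (\nabla_{e_a}T)(e_a,\vnab u,\bnab u) + Ric(\vnab u,\bnab u) = 0. \qquad (\ast)
\]
It therefore suffices to verify that $u^2 V(f)$ equals the left-hand side of $(\ast)$.

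To do this, I would use $\bnab f = u^{-1}\bnab u$, $\vnab f = u^{-1}\vnab u$, together with the standard logarithmic Hessian identity $\nabla^2 f(X,Y) = u^{-1}\nabla^2 u(X,Y) - u^{-2}\,du(X)\,du(Y)$, in order to rewrite each of the four terms in the definition \eqref{Voperator} of $V(f)$ in terms of $u$-objects. By the tensoriality of the arguments, one immediately obtains $Ric(\vnab f,\bnab f) = u^{-2}Ric(\vnab u,\bnab u)$, $(\nabla_{e_a}T)(e_a,\vnab f,\bnab f) = u^{-2}(\nabla_{e_a}T)(e_a,\vnab u,\bnab u)$, and $T(\vnab f,\bnab f,\bnab f) = u^{-3}T(\vnab u,\bnab u,\bnab u)$. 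The cross-term then splits as
\[
g(T_{\vnab f},\nabla^2 f) = u^{-2}g(T_{\vnab u},\nabla^2 u) - u^{-3}T(\vnab u,\bnab u,\bnab u),
\]
the second piece coming from the $u^{-2}\,du\otimes du$ part of the Hessian; here one uses that contraction with the symmetric tensor $du\otimes du$ selects the symmetric part $T^0_{\vnab u}$ of $T_{\vnab u}$ and reassembles to $g(T(\vnab u,\bnab u),\bnab u) = T(\vnab u,\bnab u,\bnab u)$.

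Summing the four pieces according to \eqref{Voperator}, the cubic torsion contributions cancel exactly: the $+2u^{-3}T(\vnab u,\bnab u,\bnab u)$ coming from $2T(\vnab f,\bnab f,\bnab f)$ annihilates the $-2u^{-3}T(\vnab u,\bnab u,\bnab u)$ produced by the Hessian cross-term in $2g(T_{\vnab f},\nabla^2 f)$. What is left is exactly $u^{-2}$ times the left-hand side of $(\ast)$, whence $V(f)=0$ as claimed. The only delicate step is the Hessian cross-term computation, specifically the identification of the $du\otimes du$ contraction with $T_{\vnab u}$ as the cubic quantity $T(\vnab u,\bnab u,\bnab u)$; this cancellation also clarifies why the definition \eqref{Voperator} of $V$ must carry the extra leading term $2T(\vnab\varphi,\bnab\varphi,\bnab\varphi)$, which is precisely the logarithmic cross-contribution arising when passing from $u$ to $\ln u$.
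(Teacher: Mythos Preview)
Your proof is correct and follows essentially the same route as the paper: both arguments express the four terms of $V(f)$ in terms of $u$ via the chain rule $\nabla^2 f = u^{-1}\nabla^2 u - u^{-2}\,du\otimes du$, observe the exact cancellation of the cubic torsion pieces $\pm 2u^{-3}T(\vnab u,\bnab u,\bnab u)$, and then invoke Lemma~\ref{Auxiliary3} for $u$ together with the hypothesis \eqref{sublapxicom} to kill the remaining bracket. The only difference is cosmetic ordering---you isolate $(\ast)$ first and then match $u^2V(f)$ to it, whereas the paper computes $V(f)$ down to $u^{-2}$ times that bracket and then appeals to \eqref{aux3} and \eqref{sublapxicom} at the end.
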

\begin{proof}
Indeed, we have  from \eqref{Voperator}
\begin{multline*}
V(f)\\=2T(\vnab\ln u,\bnab \ln u,\bnab\ln u)+2g(T_{\vnab\ln u},\nabla^2\ln u)-(\nabla_{e_a}T)(e_a,\vnab\ln u,\bnab\ln u)+Ric(\vnab\ln u,\bnab\ln u)\\=\frac{2}{u^3}T(\vnab u,\bnab u,\bnab u)+2T(\vnab\ln u,e_a,e_b)(\nabla^2\ln u)(e_a,e_b)-\frac{1}{u^2}(\nabla_{e_a}T)(e_a,\vnab u,\bnab u)+\frac{1}{u^2}Ric(\vnab u,\bnab u)\\=\frac{2}{u^3}T(\vnab u,\bnab u,\bnab u)+\frac{2}{u}T(\vnab u,e_a,e_b)\Big[-\frac{du(e_a)du(e_b)}{u^2}+\frac{1}{u}(\nabla^2u)(e_a,e_b)\Big]\\-\frac{1}{u^2}(\nabla_{e_a}T)(e_a,\vnab u,\bnab u)+\frac{1}{u^2}Ric(\vnab u,\bnab u)\\=\frac{1}{u^2}\big[2g(T_{\vnab u},\bnab u)-(\nabla_{e_a}T)(e_a,\vnab u,\bnab u)+Ric(\vnab u,\bnab u)\big]=0,
\end{multline*}
in view of \eqref{aux3} and \eqref{sublapxicom}.
\end{proof}
Next, following the Riemannian and CR cases, we define the \emph{test function} $$F(x,t,a,c):M\times[0,\infty)\times\mathbb{R}^*\times\mathbb{R}^+\rightarrow\mathbb{R}:$$ 
\begin{equation}\label{testF}
F(x,t,a,c)=t\Big(|\bnab f|^2+a\partt f+ct|\vnab f|^2\Big),
\end{equation}
where $f(x,t)=\ln u(x,t)$ for a positive solution $u(x,t)$ of \eqref{qcheat} and the constants $a, c$ are fixed.
\begin{lemma}\label{Auxiliary4}
Let $(M,g,\mathbb{Q})$ be a compact qc manifold of dimension $4n+3$ and the a priori condition \eqref{PosA}  with $k\geq 0$ holds. Suppose that $u(x,t)$ is a positive solution of \eqref{qcheat}. Then $f(x,t)=\ln u(x,t)$ satisfies
\begin{multline}\label{aux4}
\Big(-\Delta_b-\partt\Big)F\geq-\frac{1}{t}F-2g(\bnab f,\bnab F)\\+t\Big[2|(\nabla^2f)_{[-1][sym]}|^2+(8n-c)|\vnab f|^2+\frac{1}{2n}(\Delta_b f)^2-2\Big(k+\frac{12}{ct}\Big)|\bnab f|^2+2ctV(f)\Big].
\end{multline}
\end{lemma}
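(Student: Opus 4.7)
The plan is to use the product form $F=tG$ with $G=|\bnab f|^2+a\partt f+ct|\vnab f|^2$, and exploit the identity
\[
(-\Delta_b-\partt)(tG) = t(-\Delta_b-\partt)G - G,
\]
which immediately produces the $-F/t$ on the right-hand side. It then suffices to bound $tLG$ from below, where $L=-\Delta_b-\partt$. A preliminary simplification: the heat relation \eqref{nablagradf}, $Lf = -|\bnab f|^2$, gives by differentiating in $t$ the identity $L(\partt f) = -\partt|\bnab f|^2$, so $L(|\bnab f|^2+a\partt f) = -\Delta_b|\bnab f|^2 - (1+a)\partt|\bnab f|^2$.

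For $-\Delta_b|\bnab f|^2$ I would apply Lemma \ref{BochnerIn} with the free parameter calibrated to $\nu=ct/2$, so that the coefficient $-\tfrac{12}{\nu}$ in Bochner reproduces, after multiplication by $t$, the target $-\tfrac{24}{ct}$ from the bracket. The hypothesis \eqref{PosA} converts the curvature--torsion combination $2(n+2)S|\bnab f|^2+2nT^0(\bnab f,\bnab f)+4(n+4)U(\bnab f,\bnab f)$ into $-k|\bnab f|^2$, and multiplication by two yields the remaining $-2k|\bnab f|^2$. The awkward $-2g(\bnab\Delta_b f,\bnab f)$ term from Bochner pairs with $-(1+a)\partt|\bnab f|^2 = -2(1+a)g(\bnab f,\bnab f_t)$ via the substitution $\bnab f_t = \bnab|\bnab f|^2 - \bnab\Delta_b f$ from the heat relation; the $\bnab\Delta_b f$ pieces consolidate leaving exactly $-2g(\bnab f,\bnab|\bnab f|^2) - 2ag(\bnab f,\bnab f_t)$, which are the first two summands of $-2g(\bnab f,\bnab G)$.

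For the last summand I use $L(ct|\vnab f|^2) = ctL|\vnab f|^2 - c|\vnab f|^2$; the second piece combines with the $+8n|\vnab f|^2$ from Bochner to form $(8n-c)|\vnab f|^2$. To handle $L|\vnab f|^2$ I expand $|\vnab f|^2=\sum_s(\xi_s f)^2$ and apply the product rule $(\Delta_b+\partt)h_s^2 = 2h_s(\Delta_b+\partt)h_s - 2|\bnab h_s|^2$ with $h_s=\xi_s f$. The positive $+2\sum_s|\bnab(\xi_s f)|^2$ surviving from $-\Delta_b|\vnab f|^2$ exactly cancels, upon the calibration $\nu=ct/2$, the Bochner residue $-4\nu\sum_s[\nabla^2 f(e_a,\xi_s)]^2$. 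The remaining $-2\sum_s(\xi_s f)\Delta_b(\xi_s f) - 2\vnab f(f_t)$ is rewritten as $2(\nabla^3 f)(e_a,e_a,\vnab f) - 2\vnab f(f_t)$, and Lemma \ref{Auxiliary1} identifies this with $-4(\nabla^2 f)(\bnab f,\vnab f) + 2V(f)$. A direct expansion using $\nabla_X\xi_s = -\alpha_j(X)\xi_k + \alpha_k(X)\xi_j$ gives $-4(\nabla^2 f)(\bnab f,\vnab f) = -2g(\bnab f,\bnab|\vnab f|^2)$, because the cyclic cross-contributions involving the $\alpha_i$ vanish; this supplies the missing third summand of $-2g(\bnab f,\bnab G)$ and leaves the stated $2ctV(f)$ residue.

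The principal obstacle I anticipate is the vertical--horizontal bookkeeping in the last step: verifying the exact cancellation of the Bochner residue against the $+2\sum_s|\bnab(\xi_s f)|^2$ term (which is what forces $\nu=ct/2$), and carefully tracking the corrections from the noncommutativity of $\Delta_b$ with the Reeb vector fields $\xi_s$ to show that they vanish by cyclic symmetry of the $sp(1)$-connection or are absorbed into the stated coefficients of $|\vnab f|^2$ and $|\bnab f|^2$. Once this calibration is verified, assembling the three contributions together with $-G = -F/t$ produces exactly the claimed lower bound.
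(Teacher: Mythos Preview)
Your proposal is correct and follows essentially the same route as the paper's proof: both apply the Bochner inequality \eqref{BochnerIneq} with the calibration $\nu=ct/2$ so that the mixed Hessian term $\sum_s[(\nabla^2 f)(e_a,\xi_s)]^2$ cancels against the contribution from $-\Delta_b|\vnab f|^2$, then invoke Lemma~\ref{Auxiliary1} together with the heat identity \eqref{nablagradf} to collapse the remaining first- and third-order terms into $2ctV(f)-\tfrac{2}{t}g(\bnab f,\bnab F)$. The only difference is organizational---you factor $F=tG$ and analyze $LG$ term by term, whereas the paper computes $\partial_t F$ and $-\Delta_b F$ separately before combining---and your anticipated obstacle (the $sp(1)$-connection corrections in the vertical bookkeeping cancelling by cyclic symmetry) is exactly what makes the paper's identity $-\Delta_b|\vnab f|^2=2\sum_s[(\nabla^2 f)(e_a,\xi_s)]^2+2(\nabla^3 f)(e_a,e_a,\vnab f)$ hold.
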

\begin{proof}
It is straighforward to check afrter a differentiation of \eqref{testF} with respect to $t$ and using \eqref{nablagradf} that the following identity holds for $t>0$
\begin{equation}\label{PartF}
\partt F=\frac{1}{t}F+t\Big[2(a+1)g\Big(\bnab f,\bnab\partt f\Big)-a\Delta_b\partt f+c|\vnab f|^2+2ct\sum_{s=1}^3df(\xi_s)\partt df(\xi_s)\Big].
\end{equation}
Furthermore, we obtain from \eqref{testF}, taking into account \eqref{PosA} with $k\geq 0$ and \eqref{BochnerIneq}, the inequality
\begin{multline*}
-\Delta_b F=t\Big[-\Delta_b|\bnab f|^2-a\Delta_b\partt f-ct\Delta_b|\vnab f|^2\Big]\\\geq t\Big[2|(\nabla^2f)_{[-1][sym]}|^2+8n|\vnab f|^2+\frac{1}{2n}(\Delta_b f)^2-2g(\bnab\Delta_b f,\bnab f)-2\Big(k+\frac{6}{\nu}\Big)|\bnab f|^2-a\Delta_b\partt f\\-2(2\nu-ct)\sum_{s=1}^3[(\nabla^2f)(e_a,\xi_s)]^2+2ct\sum_{s=1}^3df(\xi_s)\nabla^3f(e_a,e_a,\xi_s)\Big].
\end{multline*}
We substitute $\nu=\frac{ct}{2}$ into the upper inequality, which, combined with \eqref{PartF}, gives
\begin{multline}\label{PartDeltaF}
\Big(-\Delta_b-\partt\Big)F\geq-\frac{1}{t}F+t\Big\{2|(\nabla^2f)_{[-1][sym]}|^2+(8n-c)|\vnab f|^2+\frac{1}{2n}(\Delta_b f)^2-2g(\bnab\Delta_b f,\bnab f)\\-2\Big(k+\frac{12}{ct}\Big)|\bnab f|^2+2ct\sum_{s=1}^3df(\xi_s)\Big[(\nabla^3f)(e_a,e_a,\xi_s)-\partt df(\xi_s)\Big]-2(a+1)g\Big(\bnab f,\bnab\partt f\Big)\Big\}.
\end{multline}
Next, we have
\begin{multline*}
-2g(\bnab\Delta_b f,\bnab f)+2ct\sum_{s=1}^3df(\xi_s)\Big[(\nabla^3f)(e_a,e_a,\xi_s)-\partt df(\xi_s)\Big]-2(a+1)g\Big(\bnab f,\bnab\partt f\Big)\\=-2g(\bnab|\bnab f|^2,\bnab f)-4ct(\nabla^2 f)(\bnab f,\vnab f)+2ctV(f)-2ag\Big(\bnab f,\bnab\partt f\Big)\\=-2g(\bnab|\bnab f|^2,\bnab f)-4ct(\nabla^2f)(\bnab f,\vnab f)+2ctV(f)-2g\bigg(\bnab f,\bnab\Big(\frac{F}{t}-|\bnab f|^2-ct|\vnab f|^2\Big)\bigg)\\=2ctV(f)-\frac{2}{t}g(\bnab f,\bnab F),
\end{multline*}
where we used \eqref{aux1} and \eqref{nablagradf} in order to obtain the first equality and the definition \eqref{testF} of $F$ to attain the second one. The third equality is obvious.

Finally, the substitution of the last equality into \eqref{PartDeltaF} proves the lemma.
\end{proof}
\begin{lemma}\label{Auxiliary5}Let $(M,g,\mathbb{Q})$ be a compact qc manifold of dimension $4n+3$ and the a priori condition \eqref{PosA} with $k\geq 0$ holds. Suppose that $u(x,t)$ is a positive solution of \eqref{qcheat}. Then $f(x,t)=\ln u(x,t)$ satisfies
\begin{multline}\label{aux5}
\Big(-\Delta_b-\partt\Big)F\geq\frac{1}{2na^2t}F(F-2na^2)-2g(\bnab f,\bnab F)\\+t\Big[2|(\nabla^2f)_{[-1][sym]}|^2+\Big(8n-c-\frac{c}{na^2}F\Big)|\vnab f|^2+\Big(-\frac{a+1}{na^2t}F-2k-\frac{24}{ct}\Big)|\bnab f|^2+2ctV(f)\Big].
\end{multline}
\end{lemma}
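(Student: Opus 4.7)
The plan is to obtain Lemma~\ref{Auxiliary5} directly from Lemma~\ref{Auxiliary4} via a purely algebraic manipulation that replaces the $(\Delta_b f)^2$ term in \eqref{aux4} by a lower bound expressed through $F$, $|\bnab f|^2$ and $|\vnab f|^2$. No new geometric input is required; the only extra ingredient beyond \eqref{aux4} is the heat-equation identity \eqref{nablagradf}.

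First, I combine $\partt f + \Delta_b f = |\bnab f|^2$ from \eqref{nablagradf} (which is \eqref{qcheat} rewritten in terms of $f=\ln u$) with the defining formula \eqref{testF} of $F$. Eliminating $\partt f$ gives
$$F = t\bigl[(a+1)|\bnab f|^2 - a\Delta_b f + ct|\vnab f|^2\bigr],$$
and hence
$$at\,\Delta_b f = t(a+1)|\bnab f|^2 + ct^2|\vnab f|^2 - F.$$
Squaring both sides and discarding the non-negative square $[t(a+1)|\bnab f|^2 + ct^2|\vnab f|^2]^2$ yields the pointwise inequality
$$a^2 t^2 (\Delta_b f)^2 \;\geq\; F^2 - 2F\bigl[t(a+1)|\bnab f|^2 + ct^2|\vnab f|^2\bigr],$$
which, after division by $2na^2 t$, becomes
$$\frac{t}{2n}(\Delta_b f)^2 \;\geq\; \frac{F^2}{2na^2 t} - \frac{a+1}{na^2}\,F\,|\bnab f|^2 - \frac{ct}{na^2}\,F\,|\vnab f|^2.$$

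I then substitute this lower bound for the $\tfrac{t}{2n}(\Delta_b f)^2$ term inside the bracket on the right-hand side of \eqref{aux4} and regroup. The outside term $-F/t$ combines with the newly produced $F^2/(2na^2 t)$ into $F(F-2na^2)/(2na^2 t)$, while the two remaining corrections are absorbed into the coefficients of $|\bnab f|^2$ and $|\vnab f|^2$ in the bracket, contributing respectively $-\tfrac{a+1}{na^2 t}F$ and $-\tfrac{c}{na^2}F$. These are exactly the new coefficients appearing in \eqref{aux5}, so the statement follows.

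I do not expect any real obstacle: the whole content of the lemma is the observation that discarding one manifestly non-negative square in the identity for $at\Delta_b f$ converts the Bochner-type estimate of Lemma~\ref{Auxiliary4} into a form in which $F$ appears quadratically on the right-hand side. This is precisely the shape needed later in order to apply a maximum-principle argument to the test function $F$ in the proofs of Theorems~\ref{thrm1} and~\ref{thrm2}.
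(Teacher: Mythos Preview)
Your argument is correct and is essentially the same as the paper's: both express $\Delta_b f$ linearly in terms of $F$, $|\bnab f|^2$ and $|\vnab f|^2$ via \eqref{nablagradf} and \eqref{testF}, square, discard the non-negative square $\bigl[t(a+1)|\bnab f|^2+ct^2|\vnab f|^2\bigr]^2$, and then substitute the resulting lower bound for $\tfrac{1}{2n}(\Delta_b f)^2$ into \eqref{aux4}. The only cosmetic difference is that the paper first divides by $a$ before squaring, whereas you multiply through by $at$; the algebra and the dropped term are identical.
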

\begin{proof} First, we claim the truth of the following inequality
\begin{equation}\label{Deltaf2}
(\Delta_bf)^2\geq\frac{F^2}{a^2t^2}-\frac{2(a+1)}{a^2t}F|\bnab f|^2-\frac{2c}{a^2}F|\vnab f|^2.
\end{equation}
Really, we have from \eqref{nablagradf} and \eqref{testF}
\begin{equation*}
\Delta_b f=\frac{a+1}{a}|\bnab f|^2-\frac{F}{at}+\frac{ct}{a}|\vnab f|^2,
\end{equation*}
which, squared, gives 
\begin{multline*}
(\Delta_b f)^2=\frac{F^2}{(at)^2}+\Big(\frac{a+1}{a}|\bnab f|^2+\frac{ct}{a}|\vnab f|^2\Big)^2-\frac{2F}{at}\Big(\frac{a+1}{a}|\bnab f|^2+\frac{ct}{a}|\vnab f|^2\Big)\\\geq\frac{F^2}{(at)^2}-\frac{2(a+1)}{a^2t}F|\bnab f|^2-\frac{2c}{a^2}F|\vnab f|^2,
\end{multline*}
that is exactly \eqref{Deltaf2}. The substitution of \eqref{Deltaf2} into \eqref{aux4} gives \eqref{aux5}, which completes the proof of the lemma.
\end{proof}
Next, let $a, c$ and $\Theta\in[0,\infty)$ be fixed and for any $t\in[0,\Theta], \big(p(t),s(t)\big)\in M\times[0,t]$ be the point where $F$ attains its maximum on $M\times[0,t],$ i.e.
\begin{equation*}\label{maxpoint}
F(p(t),s(t),a,c)=\max_{(x,\mu)\in M\times[0,t]}F(x,\mu,a,c).
\end{equation*}
We have the following relations:
\begin{equation}\label{extremal}
\bnab F(p(t),s(t),a,c)=0,\quad-\Delta_b F(p(t),s(t),a,c)\leq 0\quad\textnormal{and}\quad\partt F(p(t),s(t),a,c)\geq 0.
\end{equation}
The first relation in \eqref{extremal} follows from the circumstance that  $\big(p(t),s(t)\big)$ is a critical point of $F(x,\mu,a,c).$ The second one is a consequence of the maximum principle for sub-elliptic operators (see e.g. \cite{BBB}), whereas the last one follows from the fact that $F(p(t),s(t),a,c)$ must be nondecreasing function in $t$. 

The substitution of \eqref{extremal} into \eqref{aux5}, considered at the point $(p(t),s(t))$, leads to the following crucial observation:
\begin{prop}\label{crucial}
Let $(M,g,\mathbb{Q})$ be a compact qc manifold of dimension $4n+3$ and the a priori condition \eqref{PosA} with $k\geq 0$ holds. Suppose that $u(x,t)$ is a positive solution of \eqref{qcheat}, $f(x,t)=\ln u(x,t)$ and $\Theta\in[0,\infty)$ is fixed. Then we have the following inequality at $(p(t),s(t))\in M\times[0,t], t\in[0,\Theta]:$
\begin{multline}\label{CrucialP}
0\geq\frac{1}{2na^2s(t)}F(F-2na^2)\\+s(t)\Big[2|(\nabla^2f)_{[-1][sym]}|^2+\Big(8n-c-\frac{c}{na^2}F\Big)|\vnab f|^2+\Big(-\frac{a+1}{na^2s(t)}F-2k-\frac{24}{cs(t)}\Big)|\bnab f|^2+2cs(t)V(f)\Big].
\end{multline}
\end{prop}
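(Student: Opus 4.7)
The plan is to apply a standard maximum principle argument to the differential inequality \eqref{aux5} established in Lemma~\ref{Auxiliary5}. Since the hypotheses of Lemma~\ref{Auxiliary5} coincide with those of the present proposition, the inequality
\begin{equation*}
\Big(-\Delta_b-\partt\Big)F\geq\frac{1}{2na^2 t}F(F-2na^2)-2g(\bnab f,\bnab F)+t\Big[\cdots\Big]
\end{equation*}
holds pointwise on $M\times[0,\Theta]$, where the bracketed expression is exactly the one appearing in \eqref{CrucialP}.

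The next step is to evaluate this inequality at the maximum point $(p(t),s(t))\in M\times[0,t]$ of $F$, for a fixed $t\in[0,\Theta]$. By hypothesis $F(p(t),s(t),a,c)=\max_{(x,\mu)\in M\times[0,t]}F(x,\mu,a,c)$, and so the three conditions in \eqref{extremal} hold: $\bnab F=0$ there since $p(t)$ is a horizontal critical point; $-\Delta_b F\le 0$ there by the strong maximum principle for the sub-Laplacian as cited in \cite{BBB}; and $\partt F\ge 0$ there because $t\mapsto F(p(t),s(t),a,c)$ is nondecreasing in $t$ (an increase in $t$ can only enlarge the set over which the maximum is taken). Combining the last two gives
\begin{equation*}
\big(-\Delta_b-\partt\big)F(p(t),s(t),a,c)\ \leq\ 0,
\end{equation*}
and the first kills the term $-2g(\bnab f,\bnab F)$ in \eqref{aux5} at $(p(t),s(t))$.

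Substituting these into \eqref{aux5} evaluated at $(p(t),s(t))$, one reads off
\begin{equation*}
0\ \geq\ \big(-\Delta_b-\partt\big)F\ \geq\ \frac{1}{2na^2 s(t)}F(F-2na^2)+s(t)\Big[\cdots\Big]\Big|_{(p(t),s(t))},
\end{equation*}
which is precisely \eqref{CrucialP}. There is no genuine obstacle in this argument; the only point worth attending to is the careful justification that $F(p(t),s(t),a,c)$ is nondecreasing in $t$ (routine: the maximum is taken over a nested increasing family of sets) and the invocation of the sub-elliptic maximum principle from \cite{BBB}, which applies because $\Delta_b$ is a sum of squares of vector fields generating the horizontal distribution and hence satisfies H\"ormander's condition on the qc manifold $M$. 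All the substantive work has been front-loaded into Lemmas~\ref{BochnerIn}--\ref{Auxiliary5}; this proposition is simply the maximum-principle harvesting step.
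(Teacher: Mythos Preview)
Your proposal is correct and follows essentially the same approach as the paper: the paper likewise derives \eqref{extremal} at the maximum point $(p(t),s(t))$ and substitutes these three conditions into \eqref{aux5} from Lemma~\ref{Auxiliary5} to obtain \eqref{CrucialP}. Your justifications for the three extremal conditions mirror the paper's (horizontal criticality, the sub-elliptic maximum principle from \cite{BBB}, and monotonicity of the maximum over nested sets), so there is no meaningful difference in method.
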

\subsection{Proof of Theorem~\ref{thrm1}} The proof relies on the observation that the following inequaity holds
\begin{equation}\label{IneqTh1}
F\big(p(\Theta),s(\Theta),-\alpha,c\big)<\frac{4(9+2n)^2}{3c}=\frac{16n^2\alpha^2}{3c},\quad\textnormal{provided}\quad 0<c<\frac{8n}{3}.
\end{equation}
We shall prove \eqref{IneqTh1} by contradiction. Indeed, suppose that 
\begin{equation}\label{NotIneqTh1}
F\big(p(\Theta),s(\Theta),-\alpha,c\big)\geq\frac{16n^2\alpha^2}{3c}. 
\end{equation}
Since $F\big(p(t),s(t),-\alpha,c\big)$ is continuous in $t\in[0,\Theta]$ and $F\big(p(0),s(0),-\alpha,c\big)=0,$ we get from \eqref{NotIneqTh1} the existence of a $t_0\in(0,\Theta],$ s.t.
\begin{equation*}\label{NotIneqTh1Ex}
F\big(p(t_0),s(t_0),-\alpha,c\big)=\frac{16n^2\alpha^2}{3c}.
\end{equation*}
Now we take \eqref{CrucialP} at the point $(p(t_0),s(t_0))$, under the assumption that $a=-\alpha, k=0$ and \eqref{commut} hold (the latter yelds $V(f)=0$, due to Lemma~\ref{Auxiliary2}), which, together with the upper equality, leads after some standard calculations to the inequality
\begin{equation*}
0\geq\frac{16n^2\alpha^2(8n-3c)}{9c^2s(t_0)}+s(t_0)\frac{8n-3c}{3}|\vnab f|^2.
\end{equation*}
But the right--hand side  of this inequality is strictly greater than $0,$ provided $0<c<\frac{8n}{3},$ which leads to a contradiction. Hence, the assumption \eqref{NotIneqTh1} fails and  \eqref{IneqTh1} holds. In other words, we have 
\begin{equation*}
\max_{(x,\mu)\in M\times[0,\Theta]}\mu\Big(|\bnab f|^2-\alpha\partt f+c\mu|\vnab f|^2\Big)<\frac{16n^2\alpha^2}{3c},
\end{equation*}
or, if we restrict our considerations on the set $M\times\{\Theta\},$ we get 
\begin{equation*}
\Theta\Big(|\bnab f|^2-\alpha\partt f+c\Theta|\vnab f|^2\Big)<\frac{16n^2\alpha^2}{3c}.
\end{equation*}
Finally, since the upper inequality holds for any $\Theta\geq 0$, we get after setting $\Theta=t>0$ and letting $c\rightarrow\frac{8n}{3}$ the needed estimate \eqref{sub1}, which ends the proof of Theorem~\ref{thrm1}.
\subsection{Proof of Corollary~\ref{cor1}} First we show the following
\begin{lemma}\label{lc2}
On a qc-Einstein space $(M,g,\mathbb{Q})$ the condition  \eqref{commut} holds true.
\end{lemma}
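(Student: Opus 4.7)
The plan is to apply Lemma~\ref{Auxiliary3} directly to the positive function $u$ itself (rather than to $f=\ln u$), which yields
\begin{equation*}
(\nabla^3u)(e_a,e_a,\vnab u) + \vnab u(\Delta_b u) = 2g(T_{\vnab u},\nabla^2 u) - (\nabla_{e_a}T)(e_a,\vnab u,\bnab u) + Ric(\vnab u,\bnab u).
\end{equation*}
Thus establishing \eqref{commut} reduces to showing that each of the three error terms on the right-hand side vanishes on a qc-Einstein manifold.

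First, I would use Theorem~\ref{sixtyseven} to recall that the qc-Einstein condition is equivalent to $T^0\equiv 0$ and $U\equiv 0$, which in turn is equivalent to the vanishing of the torsion endomorphism $T_\xi:H\to H$ for every vertical $\xi$. This gives $T(\vnab u, X)=0$ for every horizontal $X$, immediately killing the first summand $g(T_{\vnab u},\nabla^2 u)$.

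Next, I would dispose of $(\nabla_{e_a}T)(e_a,\vnab u,\bnab u)$ by observing that since $\nabla$ preserves the splitting $TM=H\oplus V$ and since $T$ vanishes identically on the mixed product $V\otimes H$ on qc-Einstein, the Leibniz expansion
\begin{equation*}
(\nabla_W T)(\xi,Y) = \nabla_W(T(\xi,Y)) - T(\nabla_W\xi,Y) - T(\xi,\nabla_W Y)
\end{equation*}
vanishes term by term ($\nabla_W\xi$ remains vertical and $\nabla_W Y$ remains horizontal). After decomposing $\vnab u=\sum_s du(\xi_s)\xi_s$ and treating the scalar coefficients $du(\xi_s)$ with the product rule, one obtains the required vanishing.

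The main obstacle will be the third summand $Ric(\vnab u,\bnab u)=\sum_s du(\xi_s)Ric(\xi_s,\bnab u)$, which forces an analysis of the mixed Ricci components $Ric(\xi_s, X)$ on qc-Einstein. I would invoke the last identity of Theorem~\ref{sixtyseven}, whose $\nabla T$ summand vanishes by the argument of the preceding paragraph, and then combine the other identity $g(T(\xi_i,\xi_j),X)=-\rho_k(I_iX,\xi_i)=-g([\xi_i,\xi_j],X)$ from the same theorem with the integrability of the vertical distribution on qc-Einstein spaces \cite{IMV,IMV1}---which forces $[\xi_i,\xi_j]$ to be vertical, so that $g([\xi_i,\xi_j],X)=0$ for horizontal $X$---together with an appropriate Bianchi-type symmetry of the Ricci 2-forms $\rho_s$, to conclude that the remaining contributions $\rho_k(I_jX,\xi_i)$ and $\rho_j(I_iX,\xi_k)$ all vanish on qc-Einstein. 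This yields $Ric(\xi_s,X)=0$ for every horizontal $X$, hence $Ric(\vnab u,\bnab u)=0$, completing the proof of \eqref{commut}.
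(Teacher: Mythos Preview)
Your proposal is correct and follows essentially the same route as the paper: apply \eqref{aux3} and then kill the three right-hand terms using that on a qc-Einstein space the torsion endomorphism vanishes and the vertical distribution is integrable. The paper's proof is terser but identical in spirit: it cites Theorem~\ref{3sas} for $T(\xi,X,Y)=0$ and integrability, then uses the fourth line of \eqref{sixtyfour} to get $\rho_t(\xi_s,X)=0$ for $s\neq t$, feeds this together with the vanishing of $(\nabla_{e_a}T)(\xi_i,\cdot,\cdot)$ into the fifth line of \eqref{sixtyfour} to obtain $Ric(\xi_t,X)=0$, and concludes that all three error terms in \eqref{aux3} vanish.

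One small comment: you do not need any ``Bianchi-type symmetry of the Ricci 2-forms'' to handle $\rho_k(I_jX,\xi_i)$ and $\rho_j(I_iX,\xi_k)$. The fourth identity of \eqref{sixtyfour}, combined with integrability, already gives $\rho_k(I_iX,\xi_i)=\rho_k(I_jX,\xi_j)=0$; running this over all cyclic permutations $(i,j,k)$ yields $\rho_t(Y,\xi_s)=0$ for every horizontal $Y$ whenever $s\neq t$, which covers both terms you need. So the paper's argument is slightly cleaner here, but your plan would go through without trouble once you notice this.
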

\begin{proof}
Let  $(M,g,\mathbb{Q})$ be a qc-Einstein space. Theorem~\ref{3sas} tells us that that the torsion endomorphosm $T(\xi,X,Y)=0$ and the vertical distrubution is integrable. Then the fourth equality of \eqref{sixtyfour} implies $\rho_t(\xi_s,X)=0, s\not=t,$ and the fifth equality in \eqref{sixtyfour} yields $Ric(\xi_t,X)=0$. Hence, the last three terms of the right--hand side of \eqref{aux3} vanish and we get \eqref{commut} holds which proofs the lemma.
\end{proof}
Further, Lemma~\ref{lc2} shows that  on a qc-Einstein manifold the conditions of Theorem \ref{thrm1} are satisfied and the estimate \eqref{sub1} follows. The proof of  Corollary~\ref{cor1} is completed.
\subsection{Proof of Theorem~\ref{thrm2} and Corollary~\ref{cor3}} Following the CR case, we shall separate our proof into two cases.

\textbf{Case I: $\Theta>\frac{3+k}{8n}.$} We claim in this case that 
\begin{equation}\label{IneqTh3}
F\Big(p(\Theta),s(\Theta),-\alpha_k,\frac{1}{\Theta}\Big)<\frac{8n^2\alpha_k^2(k+2)}{k+3}\Theta. 
\end{equation}
Just as in the proof of Theorem~\ref{thrm1}, we shall establish \eqref{IneqTh3} by contradiction. Namely, suppose that 
\begin{equation}\label{NotIneqTh3}
F\Big(p(\Theta),s(\Theta),-\alpha_k,\frac{1}{\Theta}\Big)\geq\frac{8n^2\alpha_k^2(k+2)}{k+3}\Theta.
\end{equation}
Since $F\Big(p(t),s(t),-\alpha_k,\frac{1}{\Theta}\Big)$ is continuous in $t\in[0,\Theta]$ and $F\Big(p(0),s(0),-\alpha_k,\frac{1}{\Theta}\Big)=0,$ we obtain from \eqref{NotIneqTh3} the existence of a $t_0\in(0,\Theta],$ s.t.
\begin{equation}\label{NotIneqTh3Ex}
F\Big(p(t_0),s(t_0),-\alpha_k,\frac{1}{\Theta}\Big)=\frac{8n^2\alpha_k^2(k+2)}{k+3}\Theta.
\end{equation}
Now we consider \eqref{CrucialP} at the point $(p(t_0),s(t_0)),$ assuming that $a=-\alpha_k, c=\frac{1}{\Theta}$ and \eqref{commut} holds. Hence,  $V(f)=0$ according to Lemma~\ref{Auxiliary2}. Then  \eqref{NotIneqTh3Ex} substituted into  \eqref{CrucialP} implies, after some straightforward computations,  the inequality 
\begin{multline}\label{ContraTh3} 
0\geq\frac{8n^2\alpha_k^2(k+2)\Theta}{s(t_0)(k+3)}\Big(\frac{4n(k+2)}{k+3}\Theta-1\Big)\\+s(t_0)\Big(8n-\frac{1}{\Theta}-\frac{8n(k+2)}{k+3}\Big)|\vnab f|^2+s(t_0)\Big(\frac{4(9+2nk)(k+2)}{s(t_0)(k+3)}\Theta-2k-\frac{24\Theta}{s(t_0)}\Big)|\bnab f|^2.
\end{multline}
It is easy to check that $$\frac{4n(k+2)}{k+3}\Theta-1>0,\quad 8n-\frac{1}{\Theta}-\frac{8n(k+2)}{k+3}>0\quad\textnormal{and}\quad\frac{4(9+2nk)(k+2)}{s(t_0)(k+3)}\Theta-2k-\frac{24\Theta}{s(t_0)}>0,$$ for $\Theta>\frac{3+k}{8n}.$ The latter combined with \eqref{ContraTh3} leads to  a contradiction, and hence \eqref{NotIneqTh3} fails, i.e. \eqref{IneqTh3} holds. Thus we have $$\max_{(x,\mu)\in M\times[0,\Theta]}\mu\Big(|\bnab f|^2-\alpha_k\partt f+\frac{\mu}{\Theta}|\vnab f|^2\Big)<\frac{8n^2\alpha_k^2(k+2)}{k+3}\Theta,$$
and if we shrink on the set $M\times\{\Theta\},$ we get $$\Theta\Big(|\bnab f|^2-\alpha_k\partt f +|\vnab f|^2\Big)<\frac{8n^2\alpha_k^2(k+2)}{k+3}\Theta,$$
and hence
\begin{equation}\label{caseI}
|\bnab f|^2-\alpha_k\partt f<\frac{8n^2\alpha_k^2(k+2)}{k+3},\quad\textnormal{provided}\quad t>\frac{3+k}{8n}.
\end{equation}

\textbf{Case II: $\Theta\leq\frac{3+k}{8n}.$} As before, we assert that 
\begin{equation}\label{IneqTh3A}
F\big(p(\Theta),s(\Theta),-\alpha_k,c\big)<\frac{8n^2\alpha_k^2(k+2)}{(k+3)c},\quad\textnormal{for}\quad 0<c<\frac{8n}{k+3},
\end{equation}
and we will prove it again by contradiction. Suppose that 
\begin{equation}\label{NotIneqTh3A}
F\big(p(\Theta),s(\Theta),-\alpha_k,c\big)\geq\frac{8n^2\alpha_k^2(k+2)}{(k+3)c},\quad\textnormal{for}\quad 0<c<\frac{8n}{k+3}.
\end{equation}
Because of $F\big(p(t),s(t),-\alpha_k,c\big)$ is continuous in $t\in[0,\Theta]$ and $F\big(p(0),s(0),-\alpha_k,c\big)=0,$ we have from \eqref{NotIneqTh3A} that there exists a $t_0\in(0,\Theta]$, s.t.
\begin{equation}\label{IneqTh3AEx}
F\big(p(t_0),s(t_0),-\alpha_k,c\big)=\frac{8n^2\alpha_k^2(k+2)}{(k+3)c}.
\end{equation}
We take the inequality \eqref{CrucialP} at the point $(p(t_0),s(t_0)),$ assuming $a=-\alpha_k, V(f)=0$ and  \eqref{IneqTh3AEx}, leading to the following 
\begin{multline}\label{ContraTh3A}
0\geq\frac{8n^2\alpha_k^2(k+2)}{s(t_0)(k+3)c}\Big(\frac{4n(k+2)}{(k+3)c}-1\Big)\\+s(t_0)\Big(8n-c-\frac{8n(k+2)}{k+3}\Big)|\vnab f|^2+s(t_0)\Big(\frac{4(9+2nk)(k+2)}{s(t_0)(k+3)c}-2k-\frac{24}{cs(t_0)}\Big)|\bnab f|^2.
\end{multline}
Since  $0<c<\frac{8n}{k+3}$ and hence $s(t_0)c<1$, we  obtain that
$$\frac{4n(k+2)}{(k+3)c}-1>0,\quad 8n-c-\frac{8n(k+2)}{k+3}>0\quad\textnormal{and}\quad\frac{4(9+2nk)(k+2)}{s(t_0)(k+3)c}-2k-\frac{24}{cs(t_0)}>0.$$
These inequalities together with \eqref{ContraTh3A} bring us to a  contradiction with \eqref{NotIneqTh3A}, i.e. \eqref{IneqTh3A} holds. In other words, we have $$\max_{(x,\mu)\in M\times[0,\Theta]}\mu\Big(|\bnab f|^2-\alpha_k\partt f+\mu c|\vnab f|^2\Big)<\frac{8n^2\alpha_k^2(k+2)}{(k+3)c},$$ and applying the same argument as in Case I, we get the inequality
 $$|\bnab f|^2-\alpha_k\partt f<\frac{8n^2\alpha_k^2(k+2)}{(k+3)ct},\quad\textnormal{provided}\quad 0<t\leq\frac{3+k}{8n},\quad 0<c<\frac{8n}{3+k}.$$ 
Finally, we let $c\rightarrow\frac{8n}{3+k}$ into the above inequalilty in order to obtain
\begin{equation}\label{caseII}
|\bnab f|^2-\alpha_k\partt f\leq\frac{n\alpha_k^2(k+2)}{t},\quad\textnormal{provided}\quad 0<t\leq\frac{3+k}{8n}.
\end{equation}
The inequality \eqref{sub2} follows from 
the inequalities \eqref{caseI} and \eqref{caseII}, which ends the proof of Theorem~\ref{thrm2}.  

Corollary~\ref{cor3} is a straightforward consequence of Lemma~\ref{lc2} and Theorem~\ref{thrm2}.

\subsection{Proof of Theorem~\ref{thrm3} and Corollary~\ref{cor2}}
We begin with the following equivalent form of \eqref{heatkern}:
\begin{equation}\label{heatkernA}
f(x,t)=-\varphi(x,t)-2n\alpha^2a\ln(4\pi t),
\end{equation}
which leads immediately to the equalities
\begin{equation}\label{partfnablaf}
\partt f(x,t)=-\partt\varphi(x,t)-\frac{2n\alpha^2a}{t},\quad|\bnab f|^2=|\bnab\varphi|^2\quad\textnormal{and}\quad|\bnab u|^2=u^2|\bnab\varphi|^2.
\end{equation}
We get from Corollary~\ref{cor1}
\begin{equation*}\label{IneqCons}
|\bnab f|^2-\alpha\partt f\leq\frac{2n\alpha^2}{t},
\end{equation*}
which can be written, taking into account the first two equalities in \eqref{partfnablaf}, into the form
\begin{equation}\label{Ineq}
|\bnab\varphi|^2+\alpha\partt\varphi+\frac{2n\alpha^2(\alpha a-1)}{t}\leq 0.
\end{equation}
Now we claim that
\begin{equation}\label{dertN}
\frac{d}{dt}\Ncal(u,t)=\int_Mu|\bnab\varphi|^2\vol.
\end{equation}
Indeed, we have the chain of identities
\begin{multline*}
\frac{d}{dt}\Ncal(u,t)=-\int_M\Big(\partt u+(\ln u)\partt u\Big)\vol=\int_M\Delta_b u\ln u\vol\\=\int_Mu\Delta_b\ln u\vol=\int_M\frac{|\bnab u|^2}{u}\vol=\int_M u|\bnab\varphi|^2\vol,
\end{multline*}
where we used 
\eqref{qcheat}, the self-adjointness of the sub-Laplacian and the third identity in \eqref{partfnablaf}.

Furthermore, we claim the validity of  the next identity 
\begin{equation}\label{dertTN}
\frac{d}{dt}\tilde{\Ncal}(u,t)=\int_M\Big(|\bnab\varphi|^2+\alpha\partt\varphi+\frac{2n\alpha^2(\alpha-1)a}{t}\Big)u\vol.
\end{equation}
 Indeed, taking into account 
\eqref{qcheat}, \eqref{normu}, the first equality in \eqref{partfnablaf} and \eqref{dertN}, we calculate 
\begin{multline*}
\frac{d}{dt}\tilde{\Ncal}(u,t)=\frac{d}{dt}\Ncal(u,t)-\frac{2n\alpha^2a}{t}=\int_Mu|\bnab\varphi|^2\vol-\frac{2n\alpha^2a}{t}=\int_M\Big(|\bnab\varphi|^2u-\frac{2n\alpha^2a}{t}u\Big)\vol\\=\int_M\bigg(|\bnab\varphi|^2u+\Big(\partt f+\partt\varphi\Big)u\bigg)\vol=\int_M\Big(|\bnab\varphi|^2u+\partt u+\partt\varphi u\Big)\vol=\int_M\Big(|\bnab\varphi|^2+\partt\varphi\Big)u\vol\\=\int_M\Big(|\bnab\varphi|^2+\alpha\partt\varphi+(1-\alpha)\partt\varphi\Big)u\vol=\int_M\bigg(|\bnab\varphi|^2+\alpha\partt\varphi+(1-\alpha)\Big(\partt f-\frac{2n\alpha^2a}{t}\Big)\bigg)u\vol\\=\int_M\Big(|\bnab\varphi|^2+\alpha\partt\varphi+\frac{2n\alpha^2(\alpha-1)a}{t}\Big)u\vol.
\end{multline*}
Suppose  $a\geq 1$ and hence $$\frac{2n\alpha^2(\alpha-1)a}{t}\leq\frac{2n\alpha^2(\alpha a-1)}{t}.$$
The last inequality together with \eqref{Ineq} leads to 
\begin{equation}\label{IneqderN}|\bnab\varphi|^2+\alpha\partt\varphi+\frac{2n\alpha^2(\alpha-1)a}{t}\leq|\bnab\varphi|^2+\alpha\partt\varphi+\frac{2n\alpha^2(\alpha a-1)}{t}\leq 0.
\end{equation}
Now, \eqref{IneqderN}  implies the entropy formula \eqref{Nashentropy} since $u$ is a positive function.  Theorem~\ref{thrm3} is proved.

\subsubsection{Proof of Corollary~\ref{cor2}} We may assume without loss of generality that $u(x,t)$ satisfies the normalization \eqref{normu}. Then we get from \eqref{Nashentropy}, \eqref{Nash1}, \eqref{Nash2}, \eqref{qcheat} and an integration by parts
\begin{multline*}
\frac{d}{dt}\tilde{\Ncal}(u,t)=\frac{d}{dt}\bigg(\Ncal(u,t)-2n\alpha^2a\big(\ln(4\pi t)+1\big)\bigg)=-\frac{d}{dt}\int_M(\ln u)u\vol-\frac{2n\alpha^2a}{t}\\=\int_M(\ln u)\Delta_b u\vol-\frac{2n\alpha^2a}{t}=\int_M\frac{|\bnab u|^2}{u}\vol-\frac{2n\alpha^2a}{t}=4\int_M|\bnab u^\frac12|^2\vol-\frac{2n\alpha^2a}{t}\leq 0,
\end{multline*}
and the proof of Corollary~\ref{cor2} is completed.

\subsection{Proof of Theorem~\ref{thrm4}}
We start with the  identity
\begin{equation}\label{WNT}
\Wcal(u,t)=\frac{d}{dt}\big(t\tilde{\Ncal}(u,t)\big).
\end{equation}
Indeed, we have by the very definitions \eqref{Nash1}, \eqref{Nash2} and \eqref{Perelman1} of $\Ncal$, $\tilde{\Ncal}$ and $\Wcal$, and also by \eqref{normu}, \eqref{heatkernA} and \eqref{dertN}, the chain of identities 
\begin{multline*}
\frac{d}{dt}\big(t\tilde{\Ncal}(u,t)\big)=\frac{d}{dt}\bigg(t\Ncal(u,t)-2n\alpha^2at\big(\ln(4\pi t)+1\big)\bigg)=\Ncal(u,t)+t\int_Mu|\bnab\varphi|^2\vol-2n\alpha^2a\ln(4\pi t)-4n\alpha^2a\\=-\int_M(\ln u)u\vol+t\int_Mu|\bnab\varphi|^2\vol-2n\alpha^2a\ln(4\pi t)-4n\alpha^2a\\=\int_M\big(\varphi+2n\alpha^2a\ln(4\pi t)\big)u\vol+t\int_Mu|\bnab\varphi|^2\vol-2n\alpha^2a\ln(4\pi t)-4n\alpha^2a\\=\int_M\big(\varphi+t|\bnab\varphi|^2-4n\alpha^2a\big)u\vol=\Wcal(u,t).
\end{multline*}
Furthermore, we get immediately from \eqref{WNT} the equality
\begin{equation}\label{dertW}
\frac{d}{dt}\Wcal(u,t)=2\frac{d}{dt}\tilde{\Ncal}(u,t)+t\frac{d^2}{dt^2}\tilde{\Ncal}(u,t).
\end{equation}
Our next aim is to find a suitable estimation of the derivative $\frac{d}{dt}\Wcal(u,t),$ using the representation \eqref{dertW}. More precisely, we claim  the following inequality holds
\begin{multline}\label{dertWIneq}
\frac{d}{dt}\Wcal(u,t)\leq2\int_Mu|\bnab\varphi|^2\vol-2t\int_M|\nabla^2\ln u|^2u\vol-4t\Big((n+2)S-\frac{3}{\nu}\Big)\int_M|\bnab\ln u|^2u\vol\\+4\nu t\int_M\sum_{s=1}^3\big((\nabla^2\ln u)(e_a,\xi_s)\big)^2u\vol-\frac{2n\alpha^2a}{t},
\end{multline}
where $\nu>0$ is a constant.
In order to prove \eqref{dertWIneq}, we take into account the  identities
\begin{multline}\label{dsecondtN}
\frac{d^2}{dt^2}\tilde{\Ncal}(u,t)=\frac{d}{dt}\int_M\Big(|\bnab\varphi|^2-\frac{2n\alpha^2a}{t}\Big)u\vol=\frac{d}{dt}\int_M\Big(\frac{|\bnab u|^2}{u^2}-\frac{2n\alpha^2a}{t}\Big)u\vol\\=\frac{d}{dt}\int_M\Big(\Delta_b\ln u-\frac{2n\alpha^2a}{t}\Big)u\vol=\int_M\Big(\partt(\Delta_b\ln u)u+\Delta_b\ln u\partt u\Big)\vol+\frac{2n\alpha^2a}{t^2}\\=\int_M\Big(\partt(\Delta_b\ln u)u-\Delta_b\ln u\Delta_b u\Big)\vol+\frac{2n\alpha^2a}{t^2},
\end{multline}
where we applied the very definition \eqref{Nash2} of $\tilde{\Ncal}(u,t)$, \eqref{dertN}, \eqref{partfnablaf} and \eqref{qcheat} in an obvious way.

 Moreover, we obtain by some straightforward computations, using \eqref{qcheat}, that 
\begin{equation*}
\Big(\partt+\Delta_b\Big)\Delta_b\ln u=\Delta_b|\nabla_b\ln u|^2,
\end{equation*}
which, substituted into \eqref{dsecondtN} and taking into account the self-adjointness of $\Delta_b$, leads to
\begin{equation}\label{dersecondtN}
\frac{d^2}{dt^2}\tilde{\Ncal}(u,t)=\int_M\Big(\Delta_b|\nabla_b\ln u|^2u-2\Delta_b\ln u\Delta_b u\Big)\vol+\frac{2n\alpha^2a}{t^2}.
\end{equation} 
For a positive constant $\nu$ we claim that
\begin{multline}\label{BochnerLn}
-\frac{1}{2}\Delta_b|\nabla_b\ln u|^2\\\geq|\nabla^2\ln u|^2-g(\nabla_b\Delta_b\ln u,\nabla_b\ln u)+2\Big((n+2)S-\frac{3}{\nu}\Big)|\nabla_b\ln u|^2-2\nu\sum_{s=1}^3\big((\nabla^2\ln u)(e_a,\xi_s)\big)^2.
\end{multline}
 Indeed, we get from \eqref{bohS}, having in mind $T^0=U=0$, the following 
\begin{multline*}
-\frac{1}{2}\Delta_b|\nabla_b\ln u|^2=|\nabla^2\ln u|^2-g(\nabla_b\Delta_b\ln u,\nabla_b\ln u)+2(n+2)S|\nabla_b\ln u|^2+4\sum_{s=1}^3(\nabla^2\ln u)(\xi_s, I_s\nabla_b\ln u)\\=|\nabla^2\ln u|^2-g(\nabla_b\Delta_b\ln u,\nabla_b\ln u)+2(n+2)S|\nabla_b\ln u|^2+4\sum_{s=1}^3(\nabla^2\ln u)(I_s\nabla_b\ln u,\xi_s)\\=|\nabla^2\ln u|^2-g(\nabla_b\Delta_b\ln u,\nabla_b\ln u)+2(n+2)S|\nabla_b\ln u|^2+4\sum_{s=1}^3g\big(\nabla_b(\xi_s\ln u),I_s\nabla_b\ln u\big)\\\geq|\nabla^2\ln u|^2-g(\nabla_b\Delta_b\ln u,\nabla_b\ln u)+2(n+2)S|\nabla_b\ln u|^2-4\sum_{s=1}^3||\nabla_b(\xi_s\ln u)||||I_s\nabla_b\ln u||\\\geq|\nabla^2\ln u|^2-g(\nabla_b\Delta_b\ln u,\nabla_b\ln u)+2(n+2)S|\nabla_b\ln u|^2-2\nu\sum_{s=1}^3g\big(\nabla_b(\xi_s\ln u),\nabla_b(\xi_s\ln u)\big)\\-\frac{6}{\nu}g(\nabla_b\ln u,\nabla_b\ln u)=|\nabla^2\ln u|^2-g(\nabla_b\Delta_b\ln u,\nabla_b\ln u)+2\Big((n+2)S-\frac{3}{\nu}\Big)|\nabla_b\ln u|^2\\-2\nu\sum_{s=1}^3\big((\nabla^2\ln u)(e_a,\xi_s)\big)^2,
\end{multline*}
which is exactly \eqref{BochnerLn}. Note that we used the second identity in \eqref{Ricci identities} in order to get the second equality in the above chain. The first inequality is obtained by an application of the Cauchy--Schwarz inequality, while the second inequality is obvious. Now we substitute \eqref{BochnerLn} into \eqref{dersecondtN} to get
\begin{multline}\label{dersecondtNLast}
\frac{d^2}{dt^2}\tilde{\Ncal}(u,t)\leq\int_M\Big\{\Big[-2|\nabla^2\ln u|^2+2g(\nabla_b\Delta_b\ln u,\nabla_b\ln u)-4\Big((n+2)S-\frac{3}{\nu}\Big)|\nabla_b\ln u|^2\\+4\nu\sum_{s=1}^3\big((\nabla^2\ln u)(e_a,\xi_s)\big)^2\Big]u-2\Delta_b\ln u\Delta_bu\Big\}\vol+\frac{2n\alpha^2a}{t^2}\\=\int_M\Big[-2|\nabla^2\ln u|^2-4\Big((n+2)S-\frac{3}{\nu}\Big)|\nabla_b\ln u|^2+4\nu\sum_{s=1}^3\big((\nabla^2\ln u)(e_a,\xi_s)\big)^2\Big]u\vol+\frac{2n\alpha^2a}{t^2},
\end{multline}
where we used the identity 
\begin{equation}\label{intp}\int_M\Delta_b\ln u\Delta_bu\vol=\int_Mg(\nabla_b\Delta_b\ln u,\nabla_b\ln u)u\vol
\end{equation} in order to conclude  the equality above. Of course, \eqref{intp} is obvious, due to an integration by parts.
Finally, we substitute \eqref{dersecondtNLast} into \eqref{dertW} and take into account the definition \eqref{Nash2} of $\tilde{\Ncal}(u,t)$ and \eqref{dertN} to get \eqref{dertWIneq}.

Furthermore, we shall need the following inequality
\begin{equation}\label{HessLn}
|\nabla^2\ln u|^2\geq|(\nabla^2\varphi)_{[-1][sym]}|^2+4n|\nabla_v\varphi|^2+\frac{1}{4n}(\Delta_b\varphi)^2,
\end{equation}
which is a result of \eqref{boh2} and \eqref{heatkernA}.

Next, we have from the definition \eqref{Perelman2} of $\tilde{\Wcal}(u,t),$ \eqref{dertWIneq}, \eqref{qcheat}, \eqref{HessLn} and \eqref{heatkernA} consecutively
\begin{multline}\label{DertTW}
\frac{d}{dt}\tilde{\Wcal}(u,t)=\frac{d}{dt}\Wcal(u,t)+8nt\int_M|\nabla_v\varphi|^2u\vol+8nt^2\int_Md\varphi_t(\nabla_v\varphi)u\vol+4nt^2\int_M|\nabla_v\varphi|^2u_t\vol\\\leq2\int_Mu|\nabla_b\varphi|^2\vol-2t\int_M|\nabla^2\ln u|^2u\vol-4t\Big((n+2)S-\frac{3}{\nu}\Big)\int_M|\nabla_b\ln u|^2u\vol\\+4\nu t\int_M\sum_{s=1}^3\big((\nabla^2\ln u)(e_a,\xi_s)\big)^2u\vol-\frac{2n\alpha^2a}{t}+8nt\int_M|\nabla_v\varphi|^2u\vol+8nt^2\int_Md\varphi_t(\nabla_v\varphi)u\vol\\-4nt^2\int_M|\nabla_v\varphi|^2\Delta_bu\vol\leq2\int_Mu|\nabla_b\varphi|^2\vol-2t\int_M\Big[|(\nabla^2\varphi)_{[-1][sym]}|^2+4n|\nabla_v\varphi|^2+\frac{1}{4n}(\Delta_b\varphi)^2\Big]u\vol\\-4t\Big((n+2)S-\frac{3}{\nu}\Big)\int_M|\nabla_b\varphi|^2u\vol+4\nu t\int_M\sum_{s=1}^3\big((\nabla^2\varphi)(e_a,\xi_s)\big)^2u\vol-\frac{2n\alpha^2a}{t}+8nt\int_M|\nabla_v\varphi|^2u\vol\\+8nt^2\int_Md\varphi_t(\nabla_v\varphi)u\vol-4nt^2\int_M|\nabla_v\varphi|^2\Delta_bu\vol=-2t\int_M|(\nabla^2\varphi)_{[-1][sym]}|^2u\vol\\-\frac{t}{2n}\int_M(\Delta_b\varphi)^2u\vol-4t\Big((n+2)S-\frac{3}{\nu}-\frac{1}{2t}\Big)\int_M|\nabla_b\varphi|^2u\vol+4\nu t\int_M\sum_{s=1}^3\big((\nabla^2\varphi)(e_a,\xi_s)\big)^2u\vol-\frac{2n\alpha^2a}{t}\\+8nt^2\int_Md\varphi_t(\nabla_v\varphi)u\vol-4nt^2\int_M|\nabla_v\varphi|^2\Delta_bu\vol.
\end{multline}
The next aim is to find a suitable representation of the term $$8nt^2\int_Md\varphi_t(\nabla_v\varphi)u\vol-4nt^2\int_M|\nabla_v\varphi|^2\Delta_bu\vol$$  appearing at the right--hand side of \eqref{DertTW}. 

First of all, we have from \eqref{heatkern}, after standard computations, the following formulas:
\begin{equation*}
\partt u=-u\Big(\partt\varphi+\frac{2n\alpha^2a}{t}\Big)\quad\textnormal{and}\quad\Delta_b u=-u(\Delta_b\varphi+|\nabla_b\varphi|^2).
\end{equation*}
Now, the substitution of the above two representations into \eqref{qcheat} gives
\begin{equation}\label{qcheatphi}
\partt\varphi=-\Delta_b\varphi-|\nabla_b\varphi|^2-\frac{2n\alpha^2a}{t}.
\end{equation}
Since $(M,g,\mathbb{Q})$ is a qc-Einstein manifold, we have  from Lemma~\ref{lc2}
\begin{equation}\label{nablathreePhi}
(\nabla^3\varphi)(e_a,e_a,\nabla_v\varphi)=-\nabla_v\varphi(\Delta_b\varphi).
\end{equation}
Furthermore, we obtain after simple calculations from \eqref{qcheatphi} and \eqref{nablathreePhi} the following equality
\begin{equation}\label{VertgradPhi}
(\nabla_v\varphi)\Big(\partt\varphi\Big)=(\nabla^3\varphi)(e_a,e_a\nabla_v\varphi)-2(\nabla^2\varphi)(\nabla_v\varphi,\nabla_b\varphi).
\end{equation}
Now, we assert the following identity 
\begin{equation}\label{LastDiff}
8nt^2\int_Md\varphi_t(\nabla_v\varphi)u\vol-4nt^2\int_M|\nabla_v\varphi|^2\Delta_bu\vol=-8nt^2\int_M\big((\nabla^2\varphi)(e_a,\xi_s)\big)^2u\vol.
\end{equation}
Really, we have successively
\begin{multline*}
8nt^2\int_Md\varphi_t(\nabla_v\varphi)u\vol-4nt^2\int_M|\nabla_v\varphi|^2\Delta_bu\vol=16nt^2\int_M(\nabla^3\varphi)(e_a,e_a,\nabla_v\varphi)u\vol\\-16nt^2\int_M(\nabla^2\varphi)(\nabla_v\varphi,\nabla_b\varphi)u\vol+8nt^2\int_M\big((\nabla^2\varphi)(e_a,\xi_s)\big)^2u\vol\\=16nt^2\int_M(\nabla^2\varphi)(e_a,\xi_s)(\nabla^2u)(e_a,\xi_s)\vol-16nt^2\int_M(\nabla^2\varphi)(\nabla_v\varphi,\nabla_b\varphi)u\vol\\+8nt^2\int_M\big((\nabla^2\varphi)(e_a,\xi_s)\big)^2u\vol=-16nt^2\int_M(\nabla^2\varphi)(\nabla_bu,\nabla_v\varphi)\vol-16nt^2\int_M\big((\nabla^2\varphi)(e_a,\xi_s)\big)^2u\vol\\-16nt^2\int_M(\nabla^2\varphi)(\nabla_v\varphi,\nabla_b\varphi)u\vol+8nt^2\int_M\big((\nabla^2\varphi)(e_a,\xi_s)\big)^2u\vol=-8nt^2\int_M\big((\nabla^2\varphi)(e_a,\xi_s)\big)^2u\vol,
\end{multline*}
which is exactly \eqref{LastDiff}. Notice that in the upper chain we took into account \eqref{VertgradPhi} and the self--adjointness of $\Delta_b$ in order to take the first identity, while the second one is a consequence of \eqref{heatkernA} and an integration by parts. The third equality follows from \eqref{heatkern}, while we got the last one by the second Ricci identity from \eqref{Ricci identities}. 

Now we substitute \eqref{LastDiff} into \eqref{DertTW}, setting $\nu=2nt,$ which leads to 
\begin{multline}\label{DertTWLast}
\frac{d}{dt}\tilde{\Wcal}(u,t)\leq-2t\int_M|(\nabla^2\varphi)_{[-1][sym]}|^2u\vol-\frac{t}{2n}\int_M(\Delta_b\varphi)^2u\vol-4t(n+2)S\int_Mu|\nabla_b\varphi|^2\vol\\+2\Big(\frac{3}{n}+1\Big)\int_Mu|\nabla_b\varphi|^2\vol-\frac{2n\alpha^2a}{t}.
\end{multline}
We have from \eqref{sub1} that 
\begin{equation}\label{int2}\frac{|\nabla_bu|^2}{u}\leq\alpha\partt u+\frac{2n\alpha^2}{t}u,
\end{equation}
and since $\nabla_bu=-u\nabla_b\varphi,$ we get  after an integration of  \eqref{int2}  that
\begin{equation*}
\int_M|\nabla_b\varphi|^2u\vol\leq\frac{2n\alpha^2}{t}.
\end{equation*}
A substitution of the last inequality into \eqref{DertTWLast} gives \eqref{Perelmanentropy}, provided $a\geq\frac{6+2n}{n},$ which completes the proof of Theorem~\ref{thrm4}.

\end{document}